\title{Finitary Corecursion for the \newline Infinitary Lambda Calculus\footnote{This work is supported by the Deutsche Forschungsgemeinschaft (DFG) under project MI~717/5-1}}
\titlerunning{Finitary Corecursion for the Infinitary Lambda Calculus}
\author[1]{Stefan Milius}
\author[1]{Thorsten Wißmann}
\affil[1]{Lehrstuhl für Theoretische Informatik, FAU Erlangen-N\"urnberg, Germany}
\authorrunning{S.~Milius and T.~Wi\ss\/mann}
\subjclass{F.3.2 Semantics of Programming Languages, F.4.1 Mathematical Logic, D.3.1 Formal Definitions and Theory}
\keywords{rational trees, infinitary lambda calculus, coinduction}
\begin{document}
%
%
\FXRegisterAuthor{sm}{asm}{SM}
\FXRegisterAuthor{tw}{atw}{TW}

\maketitle

\begin{abstract}
  Kurz et al.~have recently shown that infinite λ-trees with finitely many free variables modulo α-equivalence form a final coalgebra for a functor on the category of nominal sets. Here we investigate the rational fixpoint of that functor. We prove that it is formed by all rational λ-trees, i.e.~those λ-trees which have only finitely many subtrees (up to isomorphism). This yields a corecursion principle that allows the definition of operations such as substitution on rational λ-trees. 
\end{abstract}

\section{Introduction}

One of the most important concepts in computer science is the $\lambda$-calculus. It is a very simple notion of computation because its syntax consists only of three constructs: variables, $\lambda$-abstraction and function application, and its semantics consists of only two concepts $\alpha$-conversion for renaming of bound variables and $\beta$-conversion for executing function applications. Yet it is very powerful since it is Turing complete and allows to define many notions of higher level programming languages such as booleans, if-then-else, natural numbers, arithmetic operations, lists including mapping and folding, recursion etc.\footnote{Depending on the application a third semantic concept, $\eta$ conversion, may be of interest. But this is neither needed for Turing completeness nor for our work.}

However, whenever one wants to deal with inductive and coinductive definitions in the presence of variable binding subtle issues arise and one has to be careful not to mess up the variable binding. One solution to these problems has been proposed by Gabbay and Pitts~\cite{gp99}. They use \emph{nominal sets} as a framework for dealing with binding operators, abstraction and structural induction. Nominal sets go back to Fraenkel's and Mostowski's permutation model for set theory devised in the 1920s and 1930s.  They are sets equipped with an action of the group of finite permutations on a given fixed set $\V$ of atoms (here these play the role of variables). For an arbitrary nominal set one can then define the notions of ``free'' and ``bound'' variables using the notion of support (we recall this in Section~\ref{sec:nom}). Gabbay and Pitts then consider the functor 
\[
L_\alpha X= \V + [\V]X + X \times X
\]
expressing the type of the term constructors of the \l-calculus (note that the abstraction functor $[\V]X$ is a quotient of $\V \times X$ modulo renaming ``bound'' variables). And they prove that the initial algebra for $L_\alpha$ is formed by all \l-terms modulo $\alpha$-equivalence. 

Recently, Kurz et al.~\cite{nomcoalgdata} have characterized the final coalgebra for $L_\alpha$ (and more generally, for functors arising from so-called binding signatures): it is carried by the set of all infinitary \l-terms (i.e.~finite or infinite \l-trees) with finitely many free variables modulo $\alpha$-equivalence. This then allows to define operations on infinitary \l-terms by coinduction, for example substitution and operations that assign to an infinitary \l-term its normal form computations (e.g.~the B\"ohm, Levy-Longo, and Berarducci trees of a given infinitary \l-term).

Our contribution in this paper is to give a characterization of the \emph{rational fixpoint} of the functor $L_\alpha$. In general, the rational fixpoint for a functor $F$ lies between the initial algebra and the final coalgebra for $F$. If one thinks of it as a coalgebra, it is characterized as the final locally finitely presentable $F$-coalgebra. Intuitively, one may think of it as collecting all behaviours of ``finite'' (more technically, finitely presentable carried) $F$-coalgebras. Examples include regular languages, eventually periodic and rational streams, rational formal power-series etc. For a polynomial endofunctor $F_\Sigma$ on sets associated to the signature $\Sigma$, the rational fixpoint consists of regular $\Sigma$-trees of Elgot~\cite{elgot}, i.e.~those (finite and infinite) $\Sigma$-trees having only finitely many different subtrees (up to isomorphism). We will prove in Section~\ref{sec:rat} that the rational fixpoint for $L_\alpha$ on $\Nom$ is carried by all rational \l-trees modulo $\alpha$-equivalence. Before that we recall in Section~\ref{sec:pre} preliminaries on the infinitary \l-calculus, nominal sets and the rational fixpoint. The finality principle of the rational fixpoint may be understood as a finitary corecursion principle. In Section~\ref{sec:app} we show applications of our main result, in particular, that the coinductive definition of substitution given in~\cite{nomcoalgdata} restricts to rational trees. We also discuss coinductive definitions concerning normal form computations. We conclude in Section~\ref{sec:con}.


{\bf Related work.} The work presented here is based on the second author's student project reported in~\cite{wissmann2014}. 

A related approach to variable binding operations which uses presheaves over finite sets was proposed by Fiore, Plotkin and Turi~\cite{fpt99}. By now this has developed into a respectable body of work by these and other authors. Most related to our work here is the coinductive approach to infinitary and rational \l-terms studied by Ad\'amek, Milius and Velebil~\cite{highrecursion}. This work considers an endofunctor very similar to $L_\alpha$ but on the category of presheafes on finite sets. Its final coalgebra is shown to be the presheaf of all infinite \l-trees and the rational fixpoint the presheaf of all rational trees -- each of them modulo $\alpha$-equivalence. 

\iffull\else Omitted proofs and details may be found in the full version~\cite{mw15_full} of our paper.\fi

\enlargethispage{20pt} 
\section{Preliminaries}
\label{sec:pre}

We assume that readers are familiar with basic notions of category theory and with algebras and coalgebras for an endofunctor. For a given endofunctor $F$ on the category $\C$ we will write $t: \nu F \to F(\nu F)$ for the final coalgebra (assuming that it exists). Given an $F$-coalgebra $(C,c)$ we write $c^\dagger: (C,c) \to (\nu F, t)$ for the unique $F$-coalgebra homomorphism from $C$ to $\nu F$. The category of coalgebras for an endofunctor $F$ is denoted by $\CoAlg\, F$. For introductory texts on coalgebras see~\cite{rutten00,coalgebratutorial,adamek05}.
\takeout{ 
Recall that a \emph{coalgebra} for an endofunctor $F: \C \to \C$ is a pair $(C, c)$ consisting of an object $C$ of $\C$ and a morphism $c: X \to FX$ called the \emph{structure} of the coalgebra. A \emph{homomorphism} of coalgebra from $(C, c)$ to $(D, d)$ is a $\C$-morphism $f: C \to D$ such that $d \cdot f = Ff \cdot c$. An very important concept is that of a \emph{final} coalgebra, i.\,e., an $F$-coalgebra $t: \nu F \to F(\nu F)$ such that for every $F$-coalgebra $(C,c)$ there exists a unique homomorphism $c^\dagger: (C,c) \to (\nu F, t)$. Intuitively, an $F$-coalgebra $(C,c)$ can be thought of as a dynamic system with an object $C$ of states and with observations about the states (e.g.~output, next states etc.) given by $c$. The type of observations that can be made about a dynamic systems is described by the functor $F$. We denote by $\CoAlg\, F$ the category of $F$-coalgebras and their homomorphisms. For more intuition and concrete examples we refer the reader to introductory texts on coalgebras such as~\cite{rutten00,coalgebratutorial,adamek05}.}

We will now give some background on the (infinitary) \l-calculus, on nominal sets and on the rational fixpoint of a functor as needed in the present paper. 

\subsection{Infinitary $\lambda$-Calculus and Rational Trees}

Before we talk about infinitary \l-terms (aka \l-trees) first recall that ordinary \l-terms are defined starting from a fixed countable set of variables $\V$ by the grammar
\[
T ::= x \mid \lambda x.T \mid TT,
\]
where $x$ ranges over $\V$. We denote the set of all \l-terms by $\Lambda$. Free and bound variables and substitution are defined as usual with the operator $\lambda x.(-)$ binding $x$ in its argument. Often one considers \l-terms modulo \emph{$\alpha$-equivalence}, i.e., the least equivalence relation on \l-terms identifying two terms that arise by consistently renaming bound variables. One can think of a term $\lambda x. T$ as representing a computation that takes a parameter $P$ that is used in all free occurences of $x$ in $T$. Hence, the main computation rule of the \l-calculus is $\beta$-reduction, i.e.~the rule
\[
(\lambda x. T)P  \to_\beta T[x \mapsto P].
\]
For example we have $(\lambda x.\lambda y. x)\, a\, b \rightarrow_\beta (\lambda y.a)\, b \rightarrow_\beta a$, where $a$ cannot be reduced further. However, terms may have infinite reduction sequences; a prominent example is $Yf$ for the $Y$-combinator defined as $Y := \lambda g. (\lambda x.
g(x\,x))\,(\lambda x.g(x\,x))$ we have:
\begin{align*}
  Yf & = 
    (\lambda g. (\lambda x. g(x\,x))\,(\lambda x.g(x\,x))) f
    \\
    & \rightarrow_\beta (\lambda x. f(x\,x))\,(\lambda x.f(x\,x))
    \rightarrow_\beta f((\lambda x. f(x\,x))\,(\lambda x.f(x\,x)))
    \\
    &
    \rightarrow_\beta f(f((\lambda x. f(x\,x))\,(\lambda x.f(x\,x))))
    \rightarrow_\beta \cdots
\end{align*}
Informally speaking, this ``converges'' to the infinite term $f (f (f(\cdots)))$. If one takes such infinite terms as legal objects of the \l-calculus one is led to \emph{infinitary} \l-calculus. There one replaces $\l$-terms by (finite and infinite) \l-trees. A \emph{\l-tree} is a rooted and ordered tree with leaves labelled by variables in $\V$ and with two sorts of inner nodes: nodes with one successor labelled by $\lambda x$ for some variable $x \in \V$ and nodes with two successors labelled by $@$. For example, we have the \l-trees
\begin{equation}\label{eq:trees}
\begin{tikzpicture}[lambdatree, level distance =6mm]
            \node (z) {@}
            child { node {\(\l x\)}
                child { node {@}
                    child { node {\(x\)} }
                    child { node {\(x\)} }
                }
            }
            child { node {\(\l x\)}
                child { node {@}
                    child { node {\(x\)} }
                    child { node {\(x\)} }
                }
            }
            ;
          \end{tikzpicture}
          \qquad\qquad
          \begin{tikzpicture}[lambdatree, level distance =6mm]
            \node (z) {@}
            child { node {\(f\)} }
            child { node {@}
                child { node {\(f\)} }
                child { node {@}
                    child { node {\(f\)} }
                    child { node (ddots) {} }
                }
            }
            ;
            \node[anchor=north west] at (ddots.north) {$\ddots$};
        \end{tikzpicture}
\end{equation}
representing the \l-term $(\l x. xx)(\l x. xx)$ and the infinite term $f (f (f(\cdots)))$, respectively. Let $\Lambda^\infty$ be the set of all \l-trees. The notions of free and bound variables of a \l-tree are clear: a variable $x$ is bound in a \l-tree $t$ if there is a path from a leave labelled by $x$ to the root of $t$ that contains a node labelled by $\lambda x$, and $x$ is free in $t$ if there is a path from an $x$-labelled leaf to the root of $t$ that does not contain any node labelled by $\l x$. 

The classic approach to defining operations such a substitution on \l-trees uses that $\Lambda^\infty$ is the metric completion of $\Lambda$ under a natural metric; this idea of using a metric approach to dealing with infinite trees goes at least back to Arnold and Nivat~\cite{ArnoldN80}. Thus, every infinite \l-tree is regarded as the limit of the Cauchy sequence of its truncations at level $n$. Notions such as $\alpha$-equivalence and substitution of \l-trees are then defined by extending the corresponding notions on finite \l-trees (i.e.~\l-terms) continuously. More concretely, two \l-trees $s$ and $t$ are $\alpha$-equivalent iff for every natural number $n$ the pair of truncations at level $n$ of $s$ and $t$ are $\alpha$-equivalent \l-terms (see~\cite[Definition~5.17]{nomcoalgdata}). 

Our aim in this paper is to give a coalgebraic characterization of an important subclass of all \l-trees, the so called \emph{rational} \l-trees. The following definition follows Ginali's characterization~\cite{ginali} of regular $\Sigma$-trees for a signature $\Sigma$:
\begin{definition}
  A \l-tree having only finitely many subtrees (up to isomorphism) is called \emph{rational}. A \l-tree modulo $\alpha$-equivalence, i.e.~an $\alpha$-equivalence class of \l-trees, is called \emph{rational} if it contains at least one rational \l-tree.
\end{definition}
Intuitively, the rational \l-trees are those \l-trees that admit a finite representation as a \l-tree with ``uplinks''. All finite \l-trees are, of course, rational, and so is the right-hand \l-tree in~\eqref{eq:trees}. Other examples are in Figure~\ref{fig:ratexamples}.
\begin{figure}
  \begin{tabular}{lllll}
    \begin{tikzpicture}[lambdatree, level distance=7mm]
      \node (z) {@}
      child { node {\(f\)} }
      child[noedge] { node {} }
      ;
      \begin{scope}
        \path[use as bounding box]
        ($ (z) + (-1cm,5mm) $) rectangle (z);
        \path[draw] (z) .. controls ($ (z) - (-1cm,2cm) $)
        and ($ (z) + (1cm,2cm) $) .. (z);
      \end{scope}
    \end{tikzpicture}
    &
      \begin{tikzpicture}[lambdatree, level distance=7mm]
    \node (z) {@}
    child { node {\(f\)} }
    child {node (mr) {@}
        child { node {\(f\)} }
        child[noedge] { node {} }
    }
    ;
    \begin{scope}
    \path[use as bounding box]
        ($ (z) + (-1cm,5mm) $) rectangle (z);
    \path[draw] (mr) .. controls ($ (mr) - (-1cm,2cm) $)
                             and ($ (z) + (1cm,2cm) $) .. (z);
    \end{scope}
\end{tikzpicture}
    &
\begin{tikzpicture}[lambdatree, level distance=7mm]
    \node (z) {\(\l x\)}
    child { node (at) {@}
        child { node (ml) {\(\l x\)}
            child[noedge] { node (bl) {}}
        }
        child { node (mr) {@}
            child { node {\(x\)}}
            child[noedge] { node (br) {}}
        }
    }
    ;
    \begin{scope}
    \path[use as bounding box]
        ($ (z) + (-1cm,5mm) $) rectangle (br);
    \path[draw] (ml) .. controls ($ (ml) - (1cm,2cm) $)
                             and ($ (z) + (-1cm,1cm) $) .. (at);
    \path[draw] (mr) .. controls ($ (mr) - (-1cm,2cm) $)
                             and ($ (at) + (1cm,1cm) $) .. (at);
    \end{scope}
\end{tikzpicture}
    &
      \begin{tikzpicture}[lambdatree, level distance=7mm]
    \node (z) {@}
    child { node {\(\l x\)}
        child { node {@}
            child { node (bl) {\(\l y\)} }
            child { node {\(y\)} }
        }
    }
    child { node {\(x\)} }
    ;
    \begin{scope}
    \path[use as bounding box]
        ($ (z) + (1cm,5mm) $) rectangle ($ (bl) - (5mm,5mm) $);
    \path[draw] (bl) .. controls ($ (bl) - (1cm,2cm) $)
                             and ($ (z) + (-1cm,1cm) $) .. (z);
    \end{scope}
\end{tikzpicture}
    &
\begin{tikzpicture}[lambdatree, level distance=7mm]
    \node (z) {\(\l x\)}
    child { node (at) {\(\l y\)}
        child { node (mr) {@}
            child[noedge] { node {}}
            child[noedge] { node (br) {}}
        }
    }
    ;
    \begin{scope}
    \path[use as bounding box]
        ($ (z) + (-4mm,5mm) $) rectangle (br);
    \path[draw] (mr) .. controls ($ (mr) - (-5mm,2cm) $)
                             and ($ (z) + (5mm,1cm) $) .. (z);
    \path[draw] (mr) .. controls ($ (mr) - (5mm,2cm) $)
                             and ($ (at) + (-5mm,1cm) $) .. (at);
    \end{scope}
\end{tikzpicture}
\takeout{ 
\\[-2mm]
    \subfloat[ ]{
    \label{fig:floop1}
    }
    &
    \subfloat[ ]{
    \label{fig:floop2}
    }
    &
    \subfloat[ ]{
    \label{fig:ratex1}
    }
    &
    \subfloat[ ]{
    \label{fig:ratex2}
    }
    &
    \subfloat[ ]{
    \label{fig:ratex3}
    }}
\end{tabular}
    \caption{Finite representations of rational \l-trees}
    \label{fig:ratexamples}
\end{figure}
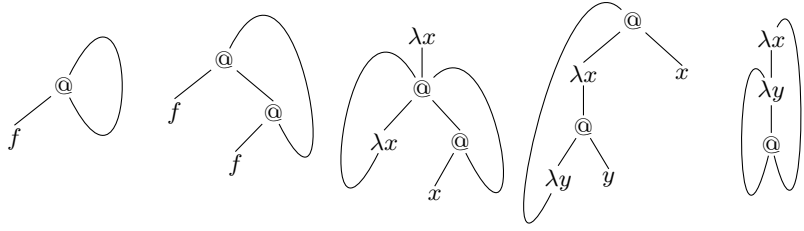
The uplink from some node $s$ to some other node $r$ indicates that the entire tree
starting at $r$ occurs as a subtree of $s$. In other words, such a \l-tree with
uplinks represents its tree unravelling, i.e.~the first and second tree on the left both represent the rational infinite \l-tree shown in~\eqref{eq:trees} on the right.

Things get more complicated, if abstractions come into play, as in the third tree. 
Here the $x$ clearly refers to the $\l x$ in the root, but
some of the \textqt{copies} of $x$ are bound by the $\l x$ in the left branch
and other copies are bound to the abstraction in the root. Something similar can
be observed in the last but one tree, which has two free variables $x,y$, but all
\textqt{copies} of $x$ and $y$ are bound by the previous copy of $\l x$ and $\l
y$ respectively. Finally, the rightmost tree represents a \l-tree that consists of
applications and abstractions only:  
\[
    \l x y. (\l y\l y \ldots) (\l x y. (\l y \l y\ldots) (\l x y. (\l y \l y\ldots) \ldots)).
\]

\subsection{Nominal Sets}
\label{sec:nom}

It was the idea of Gabbay and Pitts~\cite{gp99} to use nominal sets as a category-theoretic framework in which to describe \l-terms modulo $\alpha$-equivalence as the initial algebra for a functor $L_\alpha$. One can then use its universal property to define operations such as substitution of \l-terms. And Kurz et al.~\cite{nomcoalgdata} characterized the final coalgebra for $L_\alpha$; it is carried by the set of \l-trees with finitely many free variables modulo $\alpha$-equivalence. Again, the universal property allows one to define operations such as substitution -- this time by corecursion. We will now recall some background material on nominal sets and the main result of~\cite{nomcoalgdata}.

We fix a countable set \V of variable names. Let $\perms(\V)$ be the group of \emph{finite permutations} of $\V$, where a permutation $\pi \in \perms(\V)$ is called finite iff $\{v\in \V \mid \pi (v) \neq v \}$ is a finite set. 
Now consider a set $X$ together with a group action $\cdot:\perms(\V)\times X \to X$. 
Intuitively, one should think of $X$ as a set of terms, and for a finite permutation of variable names $\pi$ and some term $x$, $\pi\cdot x$ denotes the new term obtained after renaming the variables in $x$ according to $\pi$. In order to talk about variables \textqt{occurring} in $x\in X$ we can check which variable renamings fix the term $x$. This is captured by the notion of \emph{support}: a set $S\subseteq \V$ \emph{supports} $x\in X$ if for all $\pi\in\perms(\V)$ with $\pi(v) = v$ for all $v\in S$ we have $\pi\cdot x = x$.
Some $x\in X$ is \emph{finitely supported} if there is a finite $S\subseteq \V$ supporting $x$.

A \defemph{nominal set} is a set $X$ together with a $\perms(\V)$-action such that all elements of $X$ are finitely supported.

\begin{example}
  \begin{enumerate}
  \item The set \V of variable names with the group action given by $\pi\cdot v =
    \pi (v)$ is a nominal set; for each $v_i\in \V$ the singleton $\{v_i\}$ supports $v_i$.
  \item  Every ordinary set $X$ can be made a nominal set by equipping it with the trivial
    action $\pi\cdot x = x$ for all $x\in X$ and $\pi \in \perms(\V)$. So each $x\in
    X$ can be thought of a term not containing any variable, i.e. the empty set
    supports $x$.
  \item The set $\Lambda$ of all \l-terms forms a nominal set with the group action given by renaming of free variables. Every \l-term is supported by the set of its free variables. In contrast the set $\Lambda^\infty$ of all \l-trees is not nominal since \l-trees with infinitely many free variables do not have finite support. However, the set $\Lambda^\infty_\mathsf{ffv}$ of all \l-trees with finitely many free variables is nominal.
  \end{enumerate}
\end{example}

Notice that if $S\subseteq V$ supports $x\in X$, then $S'\supseteq S$ also supports $x\in X$. So $S$ supporting $x$ only means that by not touching the members of $S$ one does not modify the term $x$. But it is more interesting to talk about the variables actually occurring in $x$. This is achieved by considering the smallest set supporting $x$, which is denoted by $\supp(x)$. If $v\in \V\setminus \supp(x)$, we say that $v$ is \emph{fresh for} $x$, denoted by $v\fr x$.

\begin{example}
    The set $\Potf(\V)$ of finite subsets of $\V$, together with the
    point-wise action is a nominal set. The support of each $u \in \Potf(\V)$ is $u$
    itself:
    $\pi \cdot u = \{ \pi \cdot x\mid x\in u\}$ and $\supp(u) = u$.
    Note that $\Pot(\V)$ with the point-wise action is not a nominal set because the
    infinite $\{v_0,v_2,v_4,\ldots\}$ does not have any finite support.
\end{example}

The morphisms of nominal sets are those maps which are equivariant: an \defemph{equivariant map} $f: (X,\cdot) \to (Y,\star)$ is a map $f: X\to Y$ with 
$f(\pi \cdot x) = \pi\star f(x)$ for all $\pi\in \SV, x\in X$.

For example, the function $\supp: X \to \Potf(\V)$ mapping each element to its (finite) support is an equivariant map.

\begin{rem}
    For any equivariant $f: (X,\cdot) \to (Y,\star)$, we have $\supp(f(x))
    \subseteq \supp(x)$ for any $x\in X$.
\end{rem}
The nominal sets -- together with the equivariants as morphisms -- form a
category, denoted by \Nom. As shown in~\cite{gmm06}, this category is (equivalent to) a Grothendieck topos (the so-called Shanuel topos), and so it has rich categorical structure. We only mention some facts needed for the current paper. 

Monomorphisms and epimorphisms in $\Nom$ are precisely the injective and surjective equivariant maps, respectively. It is not difficult to see that every epimorphism in $\Nom$ is strong, i.e., it has the unique diagonalization property w.r.t.~any monomorphism: given an epimorphism $e: A \epito B$, a monomorphism $m: C \hra D$ and $f: A \to C$, $g: B \to D$ with $g \cdot e = m \cdot f$, there exists a unique diagonal $d: B \to C$ with $d \cdot e = f$ and $m \cdot d = g$. 

Furthermore, $\Nom$ has image-factorizations; that means that every equivariant map $f: A \to C$ factorizes as $f = m \cdot e$ for an epimorphism $e: A \epito B$ and a monomorphism $m: B \hra C$.
\takeout{ 
    \[
        \begin{tikzcd}[compactcd]
            A \arrow[->>]{r}{e}
        \arrow[shiftarr={yshift=3.5ex} ]{rr}{f}
            &
            B \arrow[hook]{r}{m} &
            C
        \end{tikzcd}
    \]} 
Note that the intermediate object $B$ is (isomorphic to) the image $f[A]$ in $B$ with the restricted action. For an endofunctor $F$ on $\Nom$ preserving monos this factorization systems lifts to $\CoAlg\, F$: every $F$-coalgebra homomorphism $f$ has a factorization $f = m \cdot e$ where $e$ and $m$ are $F$-coalgebra homomorphisms that are epimorphic and monomorphic in $\Nom$, respectively.

\enlargethispage{3mm}
Recall from~\cite[Section 2.2]{pittsbook} that $\Nom$ is complete and cocomplete with colimits and finite limits formed as in $\Set$. In fact, $\Nom$ is a locally finitely presentable category in the sense of Gabriel and Ulmer~\cite{gu71} (see also Ad\'amek and Rosick\'y~\cite{ar94}). We shall not recall that notion here as it is not needed in the current paper; intuitively, a locally finitely presentable category is a category with a well behaved ``finite'' objects (called \emph{finitely presentable} objects) such that every object can be build (as a filtered colimit) from these. Petrişan~\cite[Proposition~2.3.7]{petrisanphd} has shown that the finitely presentable objects of $\Nom$ are precisely the orbit-finite nominal sets.
\begin{definition}
  For a nominal set $(X, \cdot)$ and $x \in X$ the set $\{ \pi \cdot x \mid\pi\in \perms(\V)\}$ is called the \emph{orbit} of $x$. 
  A nominal set $(X,\o)$ is said to be \defemph{orbit-finite} if it has only finitely many orbits.
\end{definition}
The notion of orbit-finiteness plays a central role in our paper since the rational \l-trees modulo $\alpha$-equivalence are described by precisely all the coalgebras with an orbit-finite carrier for the functor $L_\alpha$ further below (cf.~Proposition~\ref{prop:image} and Theorem~\ref{thm:rat}).  

We now collect a few easy properties of orbit-finite sets that we are going to need. \iffull(The proofs can be found in the appendix.)\fi
\begin{lemma}\label{orbitsamesupp}
    For any $x_1,x_2 \in X$ in the same orbit, we have $|\supp(x_1)| = |\supp(x_2)|$.
\end{lemma}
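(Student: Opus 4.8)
The plan is to reduce everything to the equivariance of the support map recorded above. Since $x_1$ and $x_2$ lie in the same orbit, by definition there is a finite permutation $\pi \in \perms(\V)$ with $x_2 = \pi \cdot x_1$. So it suffices to understand how $\supp$ behaves under the action of $\pi$, and then to observe that $\pi$ acts by a bijection on $\V$.

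For the behaviour of the support under $\pi$, I would use that $\supp: X \to \Potf(\V)$ is equivariant together with the fact that the action on $\Potf(\V)$ is the pointwise one and satisfies $\supp(u) = u$. This gives the key identity
\[
\supp(\pi \cdot x) = \pi \cdot \supp(x) = \{\pi(v) \mid v \in \supp(x)\}.
\]
If one does not wish to cite the equivariance of $\supp$ directly, the inclusion $\supp(\pi \cdot x) \subseteq \pi \cdot \supp(x)$ can be re-derived by checking that $\pi \cdot \supp(x)$ supports $\pi \cdot x$: any $\rho$ fixing $\pi \cdot \supp(x)$ pointwise makes $\pi^{-1}\rho\pi$ fix $\supp(x)$ pointwise, hence fix $x$, so $\rho$ fixes $\pi \cdot x$; minimality of $\supp(\pi \cdot x)$ then yields the inclusion, and the reverse inclusion follows by applying the same reasoning to $\pi^{-1}$ and $\pi \cdot x$.

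Finally, because $\pi$ is a bijection of $\V$, its restriction to the finite set $\supp(x_1)$ is injective, so the pointwise image has the same number of elements. Combining this with the identity above gives
\[
|\supp(x_2)| = |\supp(\pi \cdot x_1)| = |\pi \cdot \supp(x_1)| = |\supp(x_1)|,
\]
which is exactly the claim. I expect no genuine obstacle here: the entire content is the equivariance identity for $\supp$, after which the cardinality equality is immediate, since applying a permutation renames but never merges or deletes the variables in the support.
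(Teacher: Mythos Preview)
Your proof is correct and follows essentially the same approach as the paper: both use the equivariance of $\supp$ to obtain $\supp(\pi\cdot x) = \pi\cdot\supp(x)$ and then conclude equality of cardinalities from the bijectivity of $\pi$. Your optional direct verification of the equivariance identity is a nice addition but not needed, since the paper already records $\supp$ as an equivariant map.
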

\begin{lemma}\label{sameorbitfaculty}
    For an element $x$ of a nominal set $X$, there are at most $|\supp(x)|!$
    many elements with support $\supp(x)$ in the orbit of $x$.
\end{lemma}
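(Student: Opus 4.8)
The plan is to count the elements $y$ of the orbit of $x$ that share the full support $S := \supp(x)$ by relating them to permutations of the finite set $S$, of which there are exactly $|S|! = |\supp(x)|!$. Two standard properties of nominal sets drive the argument. First, support transports along the action, $\supp(\pi\cdot x) = \pi[S]$; consequently an orbit element $y = \pi\cdot x$ satisfies $\supp(y) = S$ if and only if $\pi$ stabilises $S$ \emph{setwise}, i.e.\ $\pi[S] = S$. Second, by the very definition of support, every permutation that fixes $S$ \emph{pointwise} lies in the stabiliser of $x$. The whole proof is an interplay between these two notions of stabilisation.

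The tempting move is to send such a $y$ to the permutation $\pi|_S$ of $S$, but this is ill-defined: the stabiliser of $x$ may act nontrivially on $S$ (for instance $x = \{a,b\}\in\Potf(\V)$ is fixed by the transposition of $a$ and $b$, yet this transposition does not fix $S=\{a,b\}$ pointwise). I would therefore set up the correspondence in the opposite direction and only claim a surjection. Given a permutation $\sigma$ of $S$, extend it to some $\hat\sigma\in\perms(\V)$ acting as the identity on $\V\setminus S$, and put $\Psi(\sigma) = \hat\sigma\cdot x$. This is independent of the chosen extension: any two extensions agree on $S$, so the composite of one with the inverse of the other fixes $S$ pointwise and hence stabilises $x$. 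Thus $\Psi$ is a well-defined map from the $|S|!$ permutations of $S$ into the orbit of $x$, and by the first property each value $\Psi(\sigma)$ has support $\sigma[S] = S$.

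It remains to show $\Psi$ is \emph{onto} the set of orbit elements with support exactly $S$, which is the crux. Given such a $y$, write $y = \pi\cdot x$; the first property forces $\pi[S] = S$, so $\sigma := \pi|_S$ is a genuine permutation of $S$. Now $\hat\sigma$ and $\pi$ coincide on $S$, so once again $\hat\sigma^{-1}\pi$ fixes $S$ pointwise and stabilises $x$, whence $y = \pi\cdot x = \hat\sigma\cdot x = \Psi(\sigma)$. Therefore $\Psi$ is surjective, and the number of orbit elements with support exactly $\supp(x)$ is at most the size of the domain of $\Psi$, namely $|\supp(x)|!$. The only delicate point throughout is precisely the gap flagged above: the bound $|\supp(x)|!$ comes from the \emph{setwise} action on $S$, while both well-definedness and surjectivity of $\Psi$ rely on \emph{pointwise} stabilisers of $S$ collapsing to the identity on $x$.
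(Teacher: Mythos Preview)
Your proof is correct and follows essentially the same approach as the paper: both arguments use that $\supp(\pi\cdot x)=\pi[\supp(x)]$ to see that any relevant $\pi$ stabilises $S=\supp(x)$ setwise, and then invoke the defining property of support to replace $\pi$ by its restriction to $S$ (extended by the identity), yielding at most $|S|!$ distinct values. The paper phrases this as a factorisation $\pi=f_\pi\circ g_\pi$ with $g_\pi\cdot x=x$, whereas you package it as a surjection $\Psi$ from $\mathrm{Sym}(S)$ onto the set in question; these are the same idea. (Your remark on well-definedness of $\Psi$ is harmless but superfluous, since the extension by the identity off $S$ is unique.)
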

\begin{lemma}\label{contained}
    For a finite set $W\subseteq \V$ and an orbit $\mathcal{O}$ of the nominal
    set $X$ there are only finitely many elements in $\mathcal{O}$ whose
    support is contained in $W$.
\end{lemma}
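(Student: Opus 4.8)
The plan is to combine the two preceding lemmas. First I would fix a representative $x \in \mathcal{O}$ and set $n := |\supp(x)|$. By Lemma~\ref{orbitsamesupp}, every element $y \in \mathcal{O}$ satisfies $|\supp(y)| = n$, so the support of any orbit element is an $n$-element subset of $\V$.

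Next I observe that if $y \in \mathcal{O}$ has $\supp(y) \subseteq W$, then $\supp(y)$ must be one of the $n$-element subsets of $W$. Since $W$ is finite, there are only finitely many such subsets, namely $\binom{|W|}{n}$ of them (and none at all if $n > |W|$). This lets me partition the elements to be counted according to their support, of which there are only finitely many possibilities.

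Then, for each fixed $n$-element subset $S \subseteq W$, I would apply Lemma~\ref{sameorbitfaculty}: if there is any $y \in \mathcal{O}$ with $\supp(y) = S$, then the number of elements of the orbit whose support equals $S$ is at most $|\supp(y)|! = n!$. Summing this bound over all candidate supports yields that there are at most $\binom{|W|}{n}\cdot n!$ elements of $\mathcal{O}$ whose support is contained in $W$, which is finite.

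There is no substantial obstacle here; the result follows directly from Lemmas~\ref{orbitsamesupp} and~\ref{sameorbitfaculty}. The only points needing a little care are the uniformity of the support size across the orbit (which is precisely what Lemma~\ref{orbitsamesupp} provides, and which is what makes the number of candidate supports finite) and the trivial edge case $n > |W|$, in which no orbit element can have its support contained in $W$.
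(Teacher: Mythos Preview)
Your proof is correct and follows essentially the same strategy as the paper: partition the elements in question according to their support (a subset of $W$), invoke Lemma~\ref{sameorbitfaculty} to bound each piece, and use finiteness of $W$ to conclude. The only difference is that you additionally invoke Lemma~\ref{orbitsamesupp} to restrict attention to the $n$-element subsets of $W$, which yields the explicit bound $\binom{|W|}{n}\cdot n!$; the paper simply ranges over all subsets of $W$. Your refinement is harmless and in fact anticipates the counting argument used later in Proposition~\ref{prop:bound}.
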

\takeout{ 
\begin{proposition}\label{xfxsamesupport}
    If $f:X\to X$ is an equivariant and if $x$ and $f(x)$ are in the same orbit,
    then  $\supp(x) = \supp(f(x))$.
\end{proposition}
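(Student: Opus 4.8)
The plan is to reduce the claimed equality to two facts already available in the excerpt: the general support-containment property of equivariant maps, and the cardinality-invariance of supports within a single orbit. One inclusion comes for free and the other then follows from a cardinality argument, so there is essentially no hard computation — the entire content is in assembling these two observations in the right order.

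First I would record the easy inclusion. Since $f$ is equivariant, the Remark immediately preceding gives $\supp(f(x)) \subseteq \supp(x)$ for every $x \in X$; this uses only equivariance of $f$ and none of the extra hypotheses. So it remains to upgrade this inclusion to an equality using the assumption that $x$ and $f(x)$ lie in the same orbit.

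Next I would invoke Lemma~\ref{orbitsamesupp}. Because $x$ and $f(x)$ are in the same orbit by hypothesis, that lemma yields $|\supp(x)| = |\supp(f(x))|$. Both supports are finite sets (as $X$ is a nominal set), and we have just seen that $\supp(f(x))$ is a subset of $\supp(x)$. A finite subset having the same cardinality as the ambient finite set must coincide with it, whence $\supp(f(x)) = \supp(x)$, which is exactly the claim.

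The only step that requires any attention is making sure the two cited facts apply: the Remark applies to any equivariant map and so covers $f$ directly, and Lemma~\ref{orbitsamesupp} applies precisely because the hypothesis places $x$ and $f(x)$ in the same orbit. I do not expect any genuine obstacle; the ``main'' point is simply the recognition that ``a subset of a finite set with equal cardinality is the whole set'' turns the containment $\supp(f(x)) \subseteq \supp(x)$ into the desired equality once the orbit hypothesis supplies matching cardinalities.
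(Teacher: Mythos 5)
Your proof is correct and matches the paper's own argument exactly: both combine the Remark's inclusion $\supp(f(x)) \subseteq \supp(x)$ with the cardinality equality from Lemma~\ref{orbitsamesupp} and conclude via finiteness of the support. Nothing to add.
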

Note that even though $f:X\to X$ is an equivariant and $x, f(x)$ are in the same
orbit, $x\neq f(x)$ might still hold. For example consider the equivariant
$\op{swap}:\V\times \V\to \V\times \V, (a,b) \mapsto (b,a)$. Then for $a\neq b$:
\[
    \supp(a,b) = \{a,b\} = \supp(b,a)
    \text{ but }
    (a,b) \neq (b,a).
\]
The generalization of that example directly leads to the following.
\begin{proposition}\label{nomblowup}
    For a fixed $x\in X$ there are finitely many equivariant functions
    $f: X \to X$ such that $x$ and $f(x)$ are in the same orbit. In fact,
    there are at most $|\supp(x)|!$ such functions, corresponding to the
    permutations on the support of $x$.
\end{proposition}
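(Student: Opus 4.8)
The plan is to reduce the count of such functions to a count of the possible images $f(x)$, and then to pin those images down using support. First I would observe that for \emph{any} equivariant $f\colon X\to X$ the Remark above gives $\supp(f(x))\subseteq\supp(x)$. If moreover $f(x)$ lies in the orbit of $x$, then Lemma~\ref{orbitsamesupp} yields $|\supp(f(x))|=|\supp(x)|$; since both sets are finite and one is contained in the other, this forces $\supp(f(x))=\supp(x)$. Hence every function $f$ of the asserted kind maps $x$ into the set $O_x$ consisting of those elements in the orbit of $x$ whose support equals $\supp(x)$.

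Next I would bound the size of $O_x$. By Lemma~\ref{sameorbitfaculty} there are at most $|\supp(x)|!$ elements of the orbit of $x$ whose support equals $\supp(x)$, so $f(x)$ can take at most $|\supp(x)|!$ values. The announced correspondence with the permutations of $\supp(x)$ comes out of this last step: writing $f(x)=\pi\cdot x$ for some $\pi\in\perms(\V)$ and using that support is equivariant, we get $\supp(x)=\supp(f(x))=\pi\cdot\supp(x)$, so $\pi$ restricts to a permutation of the finite set $\supp(x)$, and distinct images arise only from distinct such restrictions.

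Finally I would pass from images back to functions. Equivariance forces $f(\pi\cdot x)=\pi\cdot f(x)$ for all $\pi\in\perms(\V)$, so the single value $f(x)$ already determines $f$ on the whole orbit of $x$; consequently the assignment $f\mapsto f(x)$ is injective once $f$ is regarded on that orbit, and combining this with the bound $|O_x|\le|\supp(x)|!$ of the previous paragraph yields the claim.

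The step that needs the most care, and the one I expect to be the main obstacle, is exactly this last passage. The value $f(x)$ pins $f$ down only on the orbit of $x$, so to count honestly one must argue at the level of the restriction of $f$ to that orbit, i.e.~read the statement as enumerating the possible behaviours $f(\pi\cdot x)=\pi\cdot f(x)$ rather than arbitrary extensions of $f$ to the rest of $X$. The only genuinely mathematical point here is well-definedness of such a restriction, namely that $\rho\cdot x=x$ implies $\rho\cdot f(x)=f(x)$; but this is automatic for a genuine equivariant $f$, since applying $f$ to $\rho\cdot x=x$ and using equivariance gives $\rho\cdot f(x)=f(\rho\cdot x)=f(x)$. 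Thus no new difficulty hides there, and everything else is a direct application of the support lemmas recalled above.
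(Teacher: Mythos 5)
Your proposal is correct and takes essentially the same route as the paper's own proof: both reduce the count to the set of possible values $f(x)$ in the orbit of $x$, establish $\supp(f(x))=\supp(x)$ by combining the Remark ($\supp(f(x))\subseteq\supp(x)$) with Lemma~\ref{orbitsamesupp} and finiteness, and then invoke Lemma~\ref{sameorbitfaculty} for the bound $|\supp(x)|!$. Your closing caveat is well taken but harmless: the paper's proof likewise only bounds the set of reachable values $\{\pi\cdot x \mid f(x)=\pi\cdot x \text{ for some equivariant } f\}$ rather than literal functions, and your observation that $f(x)$ determines $f$ on the whole orbit makes this reading precise.
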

\begin{proof}
    For $x\in X$, define the subset of the orbit of $x$ which is
    \textqt{reachable via equivariants} as
    \[
        N = \{ \pi \cdot x \mid \text{ there is some equivariant }
                    f: X\to X\text{ and some }\pi\text{ with }f(x) = \pi\cdot x\}.
    \]
    By \autoref{xfxsamesupport}, $\supp(f(x)) = \supp(x)$. By
    \autoref{sameorbitfaculty}, there are at most $|\supp(x)|!$ elements
    with the same support as $x$ in the same orbit. Hence,
    $N$ is bound by $|\supp(x)|!$.
\end{proof}
} 
Let us now recall from Kurz et al.~\cite{nomcoalgdata} how all \l-trees form a final coalgebra in $\Nom$. First consider the following endofunctor on $\Nom$:
\[
LX = \V + \V \times X + X \times X;
\]
its coproduct components describe the type of the term constructors of the \l-calculus (variables, \l-abstraction and application, respectively). As shown in~\cite{nomcoalgdata}, the final coalgebra for this functor is carried by the set of all \l-trees containing finitely many (free and bound) variables.\footnote{Note that this is different from the set $\Lambda^\infty_\mathsf{ffv}$ mentioned in Example~\ref{ex:coalgs}.3; \l-trees in the latter may have infinitely many bound variables.} Its coalgebra structure is the obvious map decomposing a \l-tree at the root: a single node \l-tree is mapped to its node label in $\V$, a \l-tree whose root is labeled by $\lambda x$ to $(x, t)$, where $t$ is the \l-tree defined by the successor of the root and a \l-tree with root label $@$ to the pair of \l-trees defined by the successors of the root. 

Since this final coalgebra completely disregards $\alpha$-equivalence it is not possible to define substitution as a total operation on it. The solution is to replace the second component $\V \times X$ of $L$ by Gabbay and Pitts abstraction functor~\cite[Lemma~5.1]{gp99} that takes $\alpha$-equivalence into account:
\begin{definition}
  \label{def:abstr}
  Let $(X,\cdot)$ be a nominal set. We define $\alpha$-equivalence $\sim_\alpha$ as the relation on $\V×X$ as
    \[
        (v_1, x_1) \sim_\alpha (v_2, x_2)
        \    
        \text{if there exists } z\fr \{v_1,v_2\}, z\fr x_1, z\fr x_2\text{ with }
            (v_1\ z) x_1 = (v_2\ z) x_2.
    \]
    The $\sim_\alpha$-equivalence class of $(v,x)$ is denoted by $\abstr{v}x$.
    The abstraction $[\V]X$ of the nominal $X$ is the quotient $(\V ×
    X)/\mathord{\sim_\alpha}$ with the group action defined by
    \[
        \pi\cdot \abstr{v}x = \abstr{\pi(v)}(\pi\cdot x).
    \]
    For an equivariant map $f: X\to Y$, $[\V]f: [\V]X \to [\V]Y$ is defined by
    $\abstr{v}x \mapsto \abstr{v}(f(x))$.
\end{definition}
Note that the abstraction functor $[\V](-)$ is \emph{strong}, i.e., we
have a natural transformation $\tau$ with components $\tau_{X,Y}:
[\V]X \times Y \to [\V](X \times Y)$ given by
$\tau_{X,Y}(\abstr{v}x,y) = \abstr{v}(x,y)$; we will need the strength $\tau$ in Section~\ref{sec:sub}.
Now one considers the endofunctor $L_\alpha$ on $\Nom$ given by 
\[
L_\alpha X = \V + [\V] X + X \times X.
\]
Gabbay and Pitts~\cite{gp99} showed that its initial algebra consists of all \l-terms modulo $\alpha$-equivalence, and the main result of Kurz et al.~\cite{nomcoalgdata} is that the final coalgebra $\nu L_\alpha$ is carried by the set $\Lambda^\infty_\mathsf{ffv}$ of all \l-trees with finitely many free variables quotiented by $\alpha$-equivalence. The coalgebra structure is the same as on the final coalgebra for $L$ -- one can show that this is well-defined on equivalence classes modulo $\alpha$-equivalence. 

\subsection{The Rational Fixpoint}

Recall that by Lambek's Lemma~\cite{lambek}, the structure of an initial algebra and a final coalgebra for a functor $F$ are isomorphisms, so both yield \emph{fixpoints} of $F$. Here we shall be interested in a third fixpoint that lies in between initial algebra and final coalgebra called the \emph{rational fixpoint} of $F$. This can on the one hand be characterized as the initial iterative algebra for $F$ (see~\cite{iterativealgebras}) or as the final locally finitely presentable coalgebra for $F$~(see~\cite{m_linexp}). We will only recall the latter description since the former will not be needed in this paper. 

The rational fixpoint can be defined for any \emph{finitary} endofunctor $F$ on a locally finitely presentable (lfp, for short) category $\C$, i.e.~$F$ is an endofunctor on $\C$ that preserves filtered colimits. Examples of lfp categories are $\Set$, the categories of posets and of graphs, every finitary variety of algebras (such as groups, rings, and vector spaces) and every Grothendieck topos (such as $\Nom$). The finitely presentable objects in these categories are: all finite sets, posets or graphs, those algebras presented by finitely many generators and relations, and, as we mentioned before, the orbit-finite nominal sets. 

Now let $F: \C \to \C$ be finitary on the locally finitely presentable category $\C$ and consider the full subcategory $\CoAlgf\, F$ of $\CoAlg\, F$ given by all $F$-coalgebras with a finitely presentable carrier. In~\cite{m_linexp} the \emph{locally finitely presentable} $F$-coalgebras were characterized as precisely those coalgebras that arise as a colimit of a filtered diagram of coalgebras from $\CoAlgf\, F$. It follows that the \emph{final} locally finitely presentable coalgebra can be constructed as the colimit of \emph{all} coalgebras from $\CoAlgf\, F$. More precisely, one defines a coalgebra $r: \varrho F \to F(\varrho F)$ as the colimit of the inclusion functor of $\CoAlgf\, F$:
$
(\varrho F, r) := \colim (\CoAlgf\, F \hra \CoAlg\, F).
$ 
Note that since the forgetful functor $\CoAlg\, F \to \C$ creates all colimits this colimit is actually formed on the level of $\C$. 
The colimit $\varrho F$ then carries a uniquely determined coalgebra structure $r$ making it the colimit above.

As shown in~\cite{iterativealgebras}, $\varrho F$ is a fixpoint for $F$, i.e.~its coalgebra structure $r$ is an isomorphism. From~\cite{m_linexp} we obtain that local finite presentability of a coalgebra $(C,c)$ has the following concrete characterizations: (1)~for $\C = \Set$ local finiteness, i.e.~every element of $C$ is contained in a finite subcoalgebra of $C$; (2)~for $\C = \Nom$, local orbit-finiteness, i.e.~every element of $C$ is contained in an orbit-finite subcoalgebra of $C$; (3)~for $\C$ the category of vector spaces over a field $K$, local finite dimensionality, i.e., every element of $C$ is contained in a subcoalgebra of $C$ carried by a finite dimensional subspace of $C$.
\begin{example}
  \label{ex:coalgs}
  We list only a few examples of rational fixpoints; for more see~\cite{iterativealgebras,m_linexp,bms13}. 
  \begin{enumerate}
  \item Consider the functor $FX = 2 \times X^A$ on $\Set$ where $A$ is an input alphabet and $2 = \{0,1\}$. The $F$-coalgebras are precisely the deterministic automata over $A$ (without initial states). The final coalgebra is carried by the set $\Pow(A^*)$ of all formal languages and the rational fixpoint is its subcoalgebra of regular languages over $A$.
  \item For $FX = \Real \times X$ on $\Set$ the final coalgebra is carried by the set $\Real^\omega$ of all real streams and the rational fixpoint is its subcoalgebra of all eventually periodic streams, i.e.~streams $uvvv\cdots$ with $u, v \in \Real^*$. Taking the same functor on the category of real vector spaces we get the same final coalgebra $\Real^\omega$ with the componentwise vector space structure, but this time the rational fixpoint is formed by all rational streams (see~\cite{rutten_rat,m_linexp}).
  \item Let $\Sigma$ be a signature of operation symbols with prescribed arity, i.e.~a sequence $(\Sigma_n)_{n < \omega}$ of sets. This give rise to an associated polynomial endofunctor $F_\Sigma$ on $\Set$ given by $F_\Sigma X = \coprod_{n < \omega} \Sigma_n \times X^n$. Its initial algebra is formed by all $\Sigma$-terms and its final coalgebra by all (finite and infinite) $\Sigma$-trees, i.e.~rooted and ordered trees such that every node with $n$ children is labelled by an $n$-ary operation symbol. And the rational fixpoint consists precisely of all regular $\Sigma$-trees of Elgot~\cite{elgot} (see also~Courcelle~\cite{courcelle}), i.e.~those $\Sigma$-trees having only finitely many different subtrees up to isomorphism (see Ginali~\cite{ginali}).
  \end{enumerate}
\end{example}

Note that in all the above examples the rational fixpoint $\varrho F$ allways occurs as a subcoalgebra of the final coalgebra $\nu F$. But this need not be the case in general (see~\cite[Example~3.15]{bms13} for a counterexample). However, we have the following result:
\begin{proposition}[{\cite[Proposition 3.12]{bms13}}]
  \label{prop:image}
  Suppose that in $\C$ finitely presentable objects are closed under strong quotients and that $F$ is finitary and preserves monomorphisms. Then the rational fixpoint $\varrho F$ is the subcoalgebra of $\nu F$ given by the union of images of all coalgebra homomorphisms $c^\dagger: (C,c) \to (\nu F, t)$ where $(C,c)$ ranges over $\CoAlgf\, F$.\footnote{In a general lfp category the \emph{image} of $c^\dagger$ is obtained by taking a strong epi-mono factorization of $c^\dagger$, and the \emph{union} is then  obtained as a directed colimit of the resulting subobjects of $(\nu F, t)$.}
\end{proposition}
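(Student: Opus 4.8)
The plan is to exhibit the subcoalgebra $U := \bigcup_{(C,c)} c^\dagger[C]$ of $\nu F$ — the union of the images of all homomorphisms out of finitely presentable coalgebras — and to prove that $U$ is itself \emph{the} final locally finitely presentable coalgebra. Since $\varrho F$ is characterized as the final lfp coalgebra (as recalled above), this immediately yields $\varrho F \cong U$, and chasing the colimit injections will show the isomorphism is compatible with the embedding into $\nu F$.

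First I would check that $U$ is a well-defined subcoalgebra. Because $F$ preserves monomorphisms, the (strong epi, mono)-factorization system lifts to $\CoAlg\,F$, so each $c^\dagger$ factors as $C \epito c^\dagger[C] \hra \nu F$, making every image $c^\dagger[C]$ a subcoalgebra of $\nu F$. By the hypothesis that finitely presentable objects are closed under strong quotients, each $c^\dagger[C]$ is again finitely presentable. The family of these images is directed in the subobject lattice of $\nu F$: given $(C_1,c_1)$ and $(C_2,c_2)$, the coproduct coalgebra on $C_1+C_2$ has a finitely presentable carrier, and its image contains both $c_1^\dagger[C_1]$ and $c_2^\dagger[C_2]$ (the coproduct injections are coalgebra homomorphisms). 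Hence $U$ is the directed union of the fp subcoalgebras $c^\dagger[C]$; since $F$ is finitary it preserves this directed colimit, which endows $U$ with a coalgebra structure turning the inclusion $U \hra \nu F$ into a coalgebra homomorphism.

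Next I would show $U$ is the final lfp coalgebra. As a directed colimit of the finitely presentable coalgebras $c^\dagger[C]$, the coalgebra $U$ is locally finitely presentable. For finality, let $(A,a)$ be an arbitrary lfp coalgebra, written as a directed colimit of fp coalgebras $A_i$; each unique map $a_i^\dagger\colon A_i \to \nu F$ factors through $U$ (its image lies in some $c^\dagger[C]$), and the resulting compatible cocone induces a homomorphism $A \to U$. Uniqueness follows because $U \hra \nu F$ is monic: any two homomorphisms $A \to U$ become equal after postcomposition with the inclusion by finality of $\nu F$, hence are already equal. Thus $U$ is final among lfp coalgebras, so $\varrho F \cong U$; using that the colimit injections $\iota_{(C,c)}\colon C \to \varrho F$ satisfy $h \circ \iota_{(C,c)} = c^\dagger$ for the unique $h\colon \varrho F \to \nu F$, one sees $h$ factors as $\varrho F \xrightarrow{\ \cong\ } U \hra \nu F$, identifying $\varrho F$ with the claimed subcoalgebra.

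The main obstacle is the first step: ensuring that the directed union $U$ genuinely carries a coalgebra structure compatible with $\nu F$. This is precisely where all three hypotheses are needed in concert — preservation of monomorphisms to lift the image factorizations into $\CoAlg\,F$ and keep the inclusions coalgebra morphisms, closure of finitely presentable objects under strong quotients so that the pieces $c^\dagger[C]$ stay finitely presentable, and finitarity of $F$ so that $F$ commutes with the directed colimit and the colimit structure map is well-defined. The remaining finality argument is then routine given the characterization of lfp coalgebras as directed colimits of finitely presentable ones.
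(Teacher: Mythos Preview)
The paper does not contain its own proof of this proposition: it is quoted verbatim from~\cite[Proposition~3.12]{bms13} and used as a black box. So there is nothing in the present paper to compare your argument against.

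That said, your outline is essentially the standard proof and looks sound. The three hypotheses are used exactly where they should be: mono-preservation to lift the (strong epi, mono) factorization to $\CoAlg\,F$, closure of finitely presentable objects under strong quotients so that each image $c^\dagger[C]$ is again in $\CoAlgf\,F$, and finitarity of $F$ so that the directed union of these images is again a coalgebra. Your finality argument is also the expected one: uniqueness is immediate from the mono $U\hookrightarrow\nu F$, and existence comes from factoring each leg of a filtered colimit presentation of an lfp coalgebra through $U$. One small point worth making explicit is why the factorizations $A_i\to U$ form a compatible cocone: this follows because postcomposing with the mono $U\hookrightarrow\nu F$ gives the unique maps $a_i^\dagger$, which are automatically compatible, and the mono reflects that compatibility back to $U$.
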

In particular, for a finitary functor $F$ on $\Set$ (or $\Nom$ resp.) preserving monomorphisms the rational fixpoint is the union in $\nu F$ of images of all finite (or orbit-finite resp.) coalgebras; in symbols: 
\[
\varrho F = \hspace{-10pt}\bigcup\limits_{\text{$(C,c)$ in $\CoAlgf\, F$}} \hspace{-10pt} c^\dagger[C] \ \subseteq\ \nu F.
\]
Note that it is sufficient to let $(C,c)$ range over those coalgebras
in $\CoAlgf\, F$ where $c^\dagger$ is injective (or an inclusion map)
because for an arbitrary (orbit-)finite $(C,c)$ in $\CoAlgf F$ its image
$c^\dagger[C]$ is an (orbit-)finite $F$-coalgebra, too. 
\section{The Rational Fixpoint in Nominal Sets}
\label{sec:rat}

In this section we are going to prove the main result of our paper, a characterization of the rational fixpoint for the functor $L_\alpha$ as the rational \l-trees modulo $\alpha$-equivalence.

But we start with the rational fixpoint of the functor $LX = \V + \V \times X + X \times X$. Note that both functors $L$ and $L_\alpha$ are finitary and preserve monomorphisms (to see this use~\cite[Proposition~5.6]{nomcoalgdata} and the fact the forgetful functor from $\Nom$ to $\Set$ creates colimits).
\begin{proposition}\label{prop:ratL}
  The rational fixpoint of the functor $L: \Nom \to \Nom$ is formed by all rational \l-trees.
\end{proposition}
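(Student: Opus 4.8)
The plan is to invoke Proposition~\ref{prop:image}. Its hypotheses hold in our situation: $L$ is finitary and preserves monomorphisms (as already noted before the statement), and in $\Nom$ the finitely presentable, i.e.\ orbit-finite, objects are closed under strong quotients, since every epimorphism in $\Nom$ is strong and the surjective equivariant image of an orbit-finite nominal set is again orbit-finite (orbits are mapped onto orbits). Hence $\varrho L$ is the subcoalgebra of $\nu L$ obtained as the union $\bigcup_{(C,c)} c^\dagger[C]$, where $(C,c)$ ranges over the orbit-finite $L$-coalgebras and $\nu L$ is the final coalgebra of all \l-trees with finitely many (free and bound) variables. It therefore suffices to prove two inclusions: every rational \l-tree lies in this union, and conversely every element of the union is a rational \l-tree.

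For the first inclusion I would take a rational \l-tree $t$ and let $S \subseteq \nu L$ be the set of its pairwise non-isomorphic subtrees, which is finite by rationality; here two subtrees of $t$ are isomorphic precisely when they are equal as elements of $\nu L$. Decomposing each subtree at its root shows that $S$ is closed under taking immediate subtrees, so $S$ is an $L$-subcoalgebra of $\nu L$ in $\Set$. To turn this into an orbit-finite coalgebra in $\Nom$, I would pass to its orbit closure $\bar S = \{\pi \cdot s \mid \pi \in \perms(\V),\ s \in S\}$. Since the structure map of $\nu L$ is equivariant and $S$ is a subcoalgebra, $\bar S$ is again a subcoalgebra, and being a finite union of orbits it is orbit-finite. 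Its inclusion into $\nu L$ is then the homomorphism $c^\dagger$ for the coalgebra $\bar S$, so from $t \in S \subseteq \bar S$ I obtain $t \in c^\dagger[\bar S] \subseteq \varrho L$.

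For the converse inclusion I would take any $t = c^\dagger(x)$ with $(C,c)$ orbit-finite. As $c^\dagger$ is a coalgebra homomorphism it commutes with root decomposition, so every subtree of $t$ is of the form $c^\dagger(y)$ for some $y \in C$, and hence all subtrees of $t$ lie in $c^\dagger[C]$. Being equivariant, $c^\dagger$ maps each orbit of $C$ into a single orbit of $\nu L$, so $c^\dagger[C]$ meets only finitely many orbits of $\nu L$. Moreover every subtree $s$ of $t$ satisfies $\supp(s) \subseteq \supp(t)$, because any permutation fixing the finitely many variables occurring in $t$ fixes each of its subtrees. Applying Lemma~\ref{contained} with $W = \supp(t)$ to each of the finitely many orbits met by $c^\dagger[C]$ then bounds the number of subtrees of $t$, so $t$ is rational.

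The main obstacle is this converse inclusion: orbit-finiteness of $C$ does not by itself bound the number of subtrees of $t$, since a single orbit of $C$ is in general sent by $c^\dagger$ onto infinitely many distinct \l-trees (permutations act non-trivially on variable labels, so same-orbit states need not yield isomorphic subtrees). The crucial extra ingredient is that all subtrees of the \emph{fixed} tree $t$ share the common finite support $\supp(t)$, which is exactly what allows Lemma~\ref{contained} to cut the count down to a finite number.
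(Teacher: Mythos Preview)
Your argument is correct and follows essentially the same route as the paper's proof: for the direction ``$\varrho L \subseteq$ rational'' you use that all subtrees of $t$ lie in an orbit-finite nominal set and have support contained in $\supp(t)$, then apply Lemma~\ref{contained}; for ``rational $\subseteq \varrho L$'' you exhibit an orbit-finite subcoalgebra of $\nu L$ containing $t$. The only notable difference is that in the second direction the paper tersely says ``take the subtrees of $t$ as the carrier set,'' whereas you explicitly pass to the orbit closure $\bar S$ to obtain a genuine nominal subset with an equivariant structure map---this is the more careful formulation of the same idea.
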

\takeout{ 
When taking $\alpha$-equivalence into account, we need to use the functor
\[
    L_\alpha X = \V + [\V]X + X×X.
\]
$L_\alpha$ is closely related to $L$ by the natural transformation $q: L \to L_\alpha$
\[
    q_X \equiv
    \big(
    L X =
    \V + \V×X + X×X
    \xrightarrow{\V + [-]_\alpha + X×X}
    \V + [\V]X + X×X
    = L_\alpha \big).
\]} 

In the proof of the following theorem we will slightly abuse notation and consider $LX = \V + \V \times X + X \times X$ as an endofunctor on $\Set$. Note that its final coalgebra is formed by the set $\Lambda^\infty$ of all \l-trees and its rational fixpoint by all rational \l-trees (this follows from Example~\ref{ex:coalgs}.3). 
\begin{theorem}
    Let $X\xrightarrow{a} L_\alpha X$ be an orbit-finite coalgebra. Then for all $\op{root}\in X$,
    $a^\dagger(\op{root}) \in \varrho L_\alpha$ is a rational $\lambda$-tree.
    \label{thm:onlyrational}
\end{theorem}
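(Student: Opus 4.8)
My strategy is to transform the orbit-finite $L_\alpha$-coalgebra $(X,a)$ into a \emph{finite} $\Set$-coalgebra for $LX=\V+\V\times X+X\times X$ whose behaviour is a rational $\lambda$-tree representing the class $a^\dagger(\op{root})$. Write $q_X\colon LX\to L_\alpha X$ for the natural transformation acting as the identity on $\V$ and on $X\times X$, and as the quotient map $[-]_\alpha\colon\V\times X\to[\V]X$, $(v,x)\mapsto\abstr{v}{x}$, on the middle summand. Composing an $L$-coalgebra structure $c\colon C\to LC$ with $q_C$ yields an $L_\alpha$-coalgebra, and $q$ induces a map $\bar q\colon\nu L\to\nu L_\alpha$ sending a $\lambda$-tree to its $\alpha$-equivalence class; by finality $\bar q\cdot c^\dagger=(q_C\cdot c)^\dagger$. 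Hence it suffices to exhibit a finite $L$-coalgebra $(C,c)$ with $\op{root}\in C$ such that $(C,q_C\cdot c)$ is a subcoalgebra of $(X,a)$: then $a^\dagger(\op{root})=\bar q\bigl(c^\dagger(\op{root})\bigr)$, and since $(C,c)$ is finite, $c^\dagger(\op{root})$ lies in the rational fixpoint $\varrho L$ and so is a rational $\lambda$-tree by Example~\ref{ex:coalgs}.3 and Proposition~\ref{prop:image}. Its $\alpha$-class being $a^\dagger(\op{root})$, the latter is rational.

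\textbf{Building the finite coalgebra.} Since $X$ is orbit-finite, Lemma~\ref{orbitsamesupp} lets me set $N:=\max\{\,|\supp(x)|:x\in X\,\}<\infty$, and I fix a finite $W\subseteq\V$ with $\supp(\op{root})\subseteq W$ and $|W|\ge N+1$. I define $C\subseteq X$ as the least subset containing $\op{root}$ and closed under unfolding $a$, defining $c$ at the same time: if $a(x)\in\V$ I set $c(x):=a(x)$; if $a(x)=(x_1,x_2)$ I set $c(x):=(x_1,x_2)$ and adjoin $x_1,x_2$; and if $a(x)=\abstr{v}{y}$ I pick some $v^\ast\in W\setminus\supp(a(x))$ --- possible because $|\supp(a(x))|\le|\supp(x)|\le N<|W|$ --- take a representative $(v^\ast,y^\ast)$ with $\abstr{v^\ast}{y^\ast}=a(x)$, set $c(x):=(v^\ast,y^\ast)$ and adjoin $y^\ast$. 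By construction $q_C\cdot c=a|_C$, so $C$ is closed under $a$ and the inclusion $C\hookrightarrow X$ is an $L_\alpha$-coalgebra homomorphism, whence $a^\dagger|_C=(a|_C)^\dagger=(q_C\cdot c)^\dagger$.

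\textbf{Finiteness.} The crucial point is that every element of $C$ has support contained in $W$: this holds for $\op{root}$, it is preserved in the application case since $\supp(x_i)\subseteq\supp(a(x))\subseteq\supp(x)\subseteq W$ by equivariance of $a$, and in the abstraction case $\supp(y^\ast)\subseteq\supp(\abstr{v^\ast}{y^\ast})\cup\{v^\ast\}=\supp(a(x))\cup\{v^\ast\}\subseteq W$ because $v^\ast\in W$. As $X$ has finitely many orbits and, by Lemma~\ref{contained}, each orbit contains only finitely many elements whose support lies inside the finite set $W$, the set $C$ is finite, which completes the construction.

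\textbf{Main obstacle.} The delicate step is the abstraction case: a naive unfolding would choose a genuinely fresh bound variable at every $\lambda$-node and hence visit infinitely many states. The remedy is to draw all bound variables from the single fixed finite pool $W$; this is legitimate precisely because orbit-finiteness bounds all supports uniformly by $N$ (Lemma~\ref{orbitsamesupp}), so that $W$ always contains a usable fresh name, and it is exactly what forces $C$ to be finite via Lemma~\ref{contained}. Checking that the representative choice is consistent --- i.e.\ that $\abstr{v^\ast}{y^\ast}=a(x)$ while keeping $\supp(y^\ast)\subseteq W$ --- is the routine but essential nominal bookkeeping underlying the argument.
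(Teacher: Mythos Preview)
Your construction of the finite $L$-coalgebra $(C,c)$ is essentially the paper's: the paper takes $C=\{x\in X\mid \supp(x)\subseteq W\}$ rather than the reachable part from $\op{root}$, but your inductive verification that $\supp(x)\subseteq W$ for all $x\in C$ shows your $C$ sits inside the paper's, and the finiteness argument via Lemma~\ref{contained} is identical. So the combinatorial core is fine.

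The gap is in the sentence ``$q$ induces a map $\bar q\colon\nu L\to\nu L_\alpha$ \ldots; by finality $\bar q\cdot c^\dagger=(q_C\cdot c)^\dagger$'' and in the subsequent claim that the inclusion $C\hookrightarrow X$ is an $L_\alpha$-coalgebra homomorphism giving $a^\dagger|_C=(q_C\cdot c)^\dagger$. Two things fail here. First, $\bar q\colon\nu L\to\nu L_\alpha$ is not a total map: $\nu L$ in $\Set$ contains $\lambda$-trees with infinitely many free variables, and those have no image in $\nu L_\alpha=\Lambda^\infty_{\mathsf{ffv}}/{\sim_\alpha}$. Second, and more seriously, your $C$ is \emph{not} closed under the $\perms(\V)$-action (an element with support $W$ is moved by any permutation touching $W$), so $C$ is not a nominal set, $q_C\cdot c$ is not an equivariant map, and $(C,q_C\cdot c)$ is not an object of $\CoAlg L_\alpha$ in $\Nom$. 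Hence you cannot invoke finality of $\nu L_\alpha$ to produce or identify $(q_C\cdot c)^\dagger$, and the inclusion $C\hookrightarrow X$ is not a morphism in $\Nom$ either. There is no category in which both $(C,c)$ lives as a coalgebra and $\nu L_\alpha$ is final.

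The paper confronts exactly this mismatch. What survives is the $\Set$-level commuting square $a\cdot i = q_X\cdot Li\cdot c$ (your $q_C\cdot c=a|_C$), and from this one must prove $\eqa{c^\dagger(x)}=a^\dagger(x)$ by hand. The paper does so by an induction along the final chains: it compares the canonical cones $c^n\colon C\to L^n1$ and $a^n\colon X\to L_\alpha^n1$ via the natural maps $\eqa{-}^n\colon L^n1\to L_\alpha^n1$, using that $\nu L_\alpha$ embeds into the $\Set$-limit $\lim L_\alpha^n1$ and that $\Lambda^\infty_{\mathsf{ffv}}$ is the pullback of this embedding along $\eqa{-}\colon\nu L\to\lim L_\alpha^n1$ (results from~\cite{nomcoalgdata}). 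This is the ``non-trivial induction argument'' the proof sketch alludes to, and it is precisely the content you are hiding behind ``by finality''. Your last paragraph identifies the abstraction case as the main obstacle, but that case is in fact routine; the genuine obstacle is relating the $\Set$-behaviour $c^\dagger$ to the $\Nom$-behaviour $a^\dagger$ across categories.
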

\begin{proof}[Proofsketch]\iffull(for a full proof see the appendix)\fi
    Let $m := \max_{x\in X}\big|\supp(x)\big|$ be the maximal number of free variables in any element of $X$. This exists by \autoref{orbitsamesupp} since $X$ is orbit-finite.
    Let $W \subseteq \V$ be some set of $m+1$ variables containing $\supp(\op{root})$. Hence for all $x \in X$ there exists a $w \in W$ with $w \fr  x$.

    In the following, one constructs a rational $\l$-tree in the $\alpha$-equivalence class of $a^\dagger(\op{root})$. First, define an $L$-coalgebra $C \xrightarrow{c} LC = \V + \V\times C + C\times C$ in \Set with 
    $ C := \{x \in X\mid \supp(x) \subseteq W \}$ and 
    \[
    c(x) = \begin{cases}
      w & \text{if }a(x) = w \in W \subseteq \V \\
      (\ell, r) &\text{if }a(x) = (\ell,r)\in X\times X\\
      (w,y) &\text{if }a(x) = \abstr{v} y'\text{ and }y = (v\ w)y'\text{ for some }w\in W\setminus\supp(x)
    \end{cases}
    \]
    Next one readily verifies that $c$ is well-defined, i.e., its image really lies in $LC$.

    Furthermore, $C$ is finite because $X$ is orbit-finite and within any orbit there are only finitely many elements with a support contained in $W$ by \autoref{contained}.  Let $c^\dagger$
    denote the unique $L$-coalgebra homomorphism into the final $L$-coalgebra in \Set. Since $C$ is finite, we know that $c^\dagger: C \to \nu L$ factors through the rational fixpoint, i.e.~for every
    $x \in C$, $c^\dagger(x)$ is a rational $\lambda$-tree. One then proves that $\eqa{c^\dagger(x)} = a^\dagger(x)$ for all $x\in C\subseteq X$,  where $\eqa{-}$ denotes $\alpha$-equivalence classes. This involves a non-trivial induction argument using the \emph{final chains} of the set functor $L$ and the functor $L_\alpha$ on $\Nom$ as well as technical details from~\cite{nomcoalgdata}\iffull (see the appendix)\fi. It follows that $a^\dagger(\op{root})$ is rational.
\end{proof}

For the $L$-coalgebra $(C,c)$ in \Set from the previous proof, we know that for
any $x\in C$, the resulting tree $c^\dagger(x)$ has at most $|C|$ subtrees. This
does not hold for an $L_\alpha$-coalgebra $(X,a)$ in \Nom: if $X$ has a
non-trivial action, then the cardinality of $X$ is at least infinite, i.e.~the
cardinality does not give a reasonable bound for the number of subtrees.
And the number of orbits $n$ is not a bound either. The problem is that
multiple elements from the same orbit may represent different subtrees. For
example, consider the rational tree
\[
t :=
\begin{tikzpicture}[lambdatree, level distance=7mm, scale=0.7]
    \node (z) {@}
    child { node {\(v_0\)} }
    child { node {\(v_1\)} }
    ;
\end{tikzpicture}
,
\]
and let $X := \V + \{ (\ell,r) \in \V×\V \mid \ell \neq r\}$ be equipped with the coalgebra structure
\[
    X \xrightarrow{a} L_\alpha X,
    \quad
    a(x) =
    \begin{cases}
        v & \text{ if } x = v\in \V \\
        (\ell,r) & \text{ if }x = (\ell,r) \in \V×\V.
    \end{cases}
\]
$X$ is constructed to have two orbits: one consisting of single variables and
one consisting of unequal ordered pairs of variables. We have $(v_0,v_1) \in X$
and $a^\dagger\big((v_0, v_1)\big) = [t]_\alpha$. But $t$ has three subtrees,
namely $t$ itself, $v_0$, and $v_1$. This example is expanded later in
Example~\ref{bigexample}.

But when looking closer at the construction of $C$ in the previous proof, we can give a bound on
the number of its elements, i.e.~the number of (up to isomorphism) different subtrees of the rational tree $c^\dagger(\op{root})$. 
\begin{proposition}
  \label{prop:bound}
    Let $(X,a)$ be an orbit-finite $L_\alpha$-coalgebra with $n$ orbits and let $m = \max_{x\in X}\big|\supp(x)\big|$. Then the number of elements of the coalgebra $C$ (as constructed in the previous proof) is bounded by $n\cdot (m+1)!$.
\end{proposition}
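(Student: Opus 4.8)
The plan is to count the elements of the set $C = \{x \in X \mid \supp(x)\subseteq W\}$ orbit by orbit, exploiting that $|W| = m+1$ together with the two counting lemmas on orbits. Fix one of the $n$ orbits $\mathcal O$ of $X$ and let $k$ denote the common support cardinality of its elements; this is well-defined by \autoref{orbitsamesupp}, and by the choice of $m$ we have $k \le m$. Every element of $\mathcal O$ lying in $C$ has support equal to some $k$-element subset $S\subseteq W$, and since $|W| = m+1$ there are exactly $\binom{m+1}{k}$ such subsets.

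Next I would bound, for each admissible $S$, the number of elements of $\mathcal O$ whose support is exactly $S$. If $\mathcal O$ contains no element with support $S$ this number is $0$; otherwise I pick such an element $x$ and apply \autoref{sameorbitfaculty} to it, obtaining at most $|S|! = k!$ elements of the orbit of $x$ (i.e.\ of $\mathcal O$) with support $S$. Summing over the $\binom{m+1}{k}$ admissible subsets yields
\[
  |\mathcal O \cap C| \ \le\ \binom{m+1}{k}\cdot k!\ =\ \frac{(m+1)!}{(m+1-k)!}\ \le\ (m+1)!,
\]
where the last inequality uses $k \le m$, so that $(m+1-k)! \ge 1$.

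Finally, summing this per-orbit bound over all $n$ orbits of $X$ gives $|C| \le n\cdot (m+1)!$, as claimed. I do not anticipate a genuine obstacle here; the only point requiring a little care is that \autoref{sameorbitfaculty}, though stated for the specific support $\supp(x)$ of a chosen element, applies uniformly to every admissible support $S\subseteq W$, since any element of $\mathcal O$ realising $S$ as its support already witnesses the hypothesis of that lemma. The combinatorial simplification $\binom{m+1}{k}\,k! \le (m+1)!$ for $k\le m$ then does the rest.
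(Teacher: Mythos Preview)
Your proposal is correct and follows essentially the same argument as the paper: count elements of $C$ orbit by orbit, use \autoref{orbitsamesupp} to fix the support size $k$, bound the choices of $S\subseteq W$ by $\binom{m+1}{k}$ and the elements per $S$ by $k!$ via \autoref{sameorbitfaculty}, then simplify $\binom{m+1}{k}\,k!\le(m+1)!$ and sum over the $n$ orbits. If anything, your write-up is slightly more explicit in handling the case where no element of the orbit realises a given support $S$.
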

\begin{proof}
  Recall from the proof of \autoref{thm:onlyrational} that
  $C := \{x \in X\mid \supp(x) \subseteq W \}$, where $W$ is a set of
  $m+1$ variables.  Consider a fixed orbit $O$ whose elements have a
  support of cardinality $k$:
  there are at most $\begin{pmatrix} m+1\\
    k\end{pmatrix}$
  possibilities of choosing a $k$-element subset $S$ of $W$ and for
  any such $S$ there are at most $k!$ elements in $O$ with support
  $S$, by \autoref{sameorbitfaculty}. Thus, the number of elements of $O$ in $C$ is at most
    \[
        {m+1 \choose k} \cdot k!
        = \frac{(m+1)!}{k!\cdot (m+1-k)!}\cdot k!
        = \frac{(m+1)!}{(m+1-k)!}
        \overset{k\le m}{\le}
        \frac{(m+1)!}{(m+1-m)!}
        = (m+1)!.
    \]
    In total, the cardinality of $C$ is bounded by $n\cdot (m+1)!$.
\end{proof}

That the number of orbits $n$ occurs linearly is not surprising,
because if we have some finite carrier set $X$ with the trivial action
that \textqt{uses} all its elements for the coalgebra structure, we
have exactly one subtree per element of $X$. In
Example~\ref{bigexample} we shall see that the factor is $(m+1)!$ is
necessary. But before that we state and prove our main result:
\begin{theorem}
  \label{thm:rat}
    The rational fixed point $\varrho L_\alpha$ contains precisely the rational
    \l-trees modulo $\alpha$-equivalence.
\end{theorem}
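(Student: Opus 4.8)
The plan is to prove the two inclusions separately, using Proposition~\ref{prop:image} as the main bridge. Since $L_\alpha$ is finitary, preserves monomorphisms, and the orbit-finite (= finitely presentable) nominal sets are closed under strong quotients, that proposition identifies $\varrho L_\alpha$ with the union, inside $\nu L_\alpha$, of the images $a^\dagger[X]$ of all orbit-finite $L_\alpha$-coalgebras $(X,a)$. Thus it suffices to show that these images are exactly the $\alpha$-classes of rational $\lambda$-trees.

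For the inclusion of $\varrho L_\alpha$ into the rational $\lambda$-trees there is essentially nothing left to do: by Proposition~\ref{prop:image} every element of $\varrho L_\alpha$ is of the form $a^\dagger(\op{root})$ for some orbit-finite $(X,a)$ and some $\op{root}\in X$, and Theorem~\ref{thm:onlyrational} states precisely that such an element is (the $\alpha$-class of) a rational $\lambda$-tree.

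For the converse I would argue by an explicit construction. Let $t$ be a rational $\lambda$-tree; since it has only finitely many subtrees up to isomorphism it carries only finitely many variable labels, so $t$ has finitely many (free and bound) variables and hence $[t]_\alpha\in\nu L_\alpha$. Let $D=\{t_1,\dots,t_k\}$ be the finite set of its subtrees up to isomorphism; decomposing each $t_i$ at the root makes $D$ a finite $L$-coalgebra in $\Set$ sitting inside $\nu L$ (the $\lambda$-trees with finitely many variables). To land in $\Nom$ I would close $D$ under the permutation action, setting $X:=\{\pi\cdot t_i \mid \pi\in\perms(\V),\ 1\le i\le k\}\subseteq\nu L$. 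Then $X$ is a nominal set with at most $k$ orbits, hence orbit-finite, and it is a subcoalgebra of the final $L$-coalgebra $(\nu L,\gamma)$: the decomposition of $\pi\cdot t_i$ at the root again yields label(s) in $\V$ and successor trees of the form $\pi\cdot t_j\in X$, so $\gamma$ restricts to an equivariant $\mathrm{str}\colon X\to LX$. Postcomposing with the natural transformation $q\colon L\to L_\alpha$ that quotients the middle summand $\V\times X$ onto the abstraction $[\V]X$ gives an orbit-finite $L_\alpha$-coalgebra $(X,a)$ with $a=q_X\cdot\mathrm{str}$.

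It then remains to identify its behaviour. By naturality of $q$, the inclusion $\iota\colon X\hookrightarrow\nu L$ is an $L_\alpha$-coalgebra homomorphism from $(X,a)$ into $(\nu L,\,q_{\nu L}\cdot\gamma)$, and the $\alpha$-quotient $h\colon\nu L\to\nu L_\alpha$ is, by the result of Kurz et al.~\cite{nomcoalgdata} recalled above, the unique $L_\alpha$-coalgebra homomorphism from $(\nu L,\,q_{\nu L}\cdot\gamma)$ to $\nu L_\alpha$. The composite $h\cdot\iota$ sends $\pi\cdot t_i$ to $[\pi\cdot t_i]_\alpha$, so by finality of $\nu L_\alpha$ it equals $a^\dagger$; in particular $a^\dagger(t)=[t]_\alpha$, whence $[t]_\alpha$ lies in the image of an orbit-finite coalgebra and therefore in $\varrho L_\alpha$. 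I expect this last identification to be the only genuinely delicate point: passing from the finite $\Set$-coalgebra of subtrees to an orbit-finite $\Nom$-coalgebra forces closure under renamings, and one must check via naturality of $q$ and finality of $\nu L_\alpha$ that the resulting behaviour map is exactly the $\alpha$-quotient, so that the class of $t$ is really hit. Everything else (finiteness of the subtree set, orbit-finiteness of $X$, equivariance of the restricted structure) is routine. A more abstract alternative would combine Proposition~\ref{prop:ratL} with the functor on coalgebras induced by $q$ to obtain a map $\varrho L\to\varrho L_\alpha$ refining the $\alpha$-quotient, but the direct construction seems more transparent.
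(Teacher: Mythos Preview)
Your argument for both inclusions is correct, but the converse direction is organized differently from the paper's. The paper builds the orbit-finite coalgebra \emph{directly inside} $\nu L_\alpha$: given a rational representative $u$ with finite subtree set $C$, it sets $X := \bigcup_{s\in C} O([s]_\alpha) \subseteq \nu L_\alpha$ and shows that the final-coalgebra structure $t$ restricts to $X$ (by a three-case analysis). Since $X$ is then a \emph{subcoalgebra} of $\nu L_\alpha$, the behaviour map $a^\dagger$ is the inclusion, so $u_\alpha \in X \subseteq \varrho L_\alpha$ is immediate with no further identification needed.

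You instead build the orbit-finite coalgebra inside $\nu L$ (orbits of the subtrees themselves, not of their $\alpha$-classes), turn it into an $L_\alpha$-coalgebra via $q$, and then have to identify $a^\dagger$ with the restriction of the $\alpha$-quotient $\nu L \to \nu L_\alpha$. This is exactly the step you flag as delicate; it amounts to checking that the quotient map is an $L_\alpha$-coalgebra homomorphism from $(\nu L,\,q_{\nu L}\cdot\gamma)$ to $(\nu L_\alpha,t_\alpha)$, which is straightforward from the concrete description of both structures as ``decompose at the root'' but is not literally stated in the preliminaries you cite. The paper's route avoids this verification entirely at the cost of the small case analysis showing $t$ restricts to $X$. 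Your detour through $\nu L$ has the conceptual payoff of making transparent that $\varrho L_\alpha$ is the $q$-image of $\varrho L$ (which you also mention as the abstract alternative at the end), whereas the paper's argument is self-contained in $\nu L_\alpha$.
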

\begin{proof}
    After \autoref{thm:onlyrational} it only remains to show that all
    $\alpha$-equivalence classes of rational
    $\l$-trees are in $\varrho L_\alpha$. Let $u_\alpha \in \nu L_\alpha$ be
    rational, witnessed by some rational representative $u \in u_\alpha$ with
    only finitely many subtrees (up to isomorphism). Let $C$ be the finite
    set of all subtrees of $u$ and define the nominal set $X$ as 
    \[
        X := \bigcup_{s\in C} O([s]_\alpha) \ \subseteq\ \nu L_\alpha,
    \]
    where $[s]_\alpha \in \nu L_\alpha$ is the $\alpha$-equivalence class of the
    subtree $s$ and $O(y) \subseteq \nu L_\alpha$ denotes the orbit of a given
    element $y\in \nu L_\alpha$. Note that the group action of $\nu
    L_\alpha$ restricts to $X$ since it is a union of orbits. 

    Next we define a coalgebra structure $a: X \to L_\alpha X$ by
    restriction of the structure $t: \nu L_\alpha \to L_\alpha(\nu
    L_\alpha)$: set $a(x) := t(x)$ for all $x \in X$. We need to check
    that this is well-defined, i.e.~that $t(x)$ really lies in
    $L_\alpha X$ for every $x \in X$. For this we consider three cases:
    \begin{enumerate}
    \item The case $t(x) \in \V$ is clear; 
    \item Suppose that $t(x) = \abstr{v}y \in [\V](\nu L_\alpha)$ where $x = \pi\cdot
    [\l w.s]_\alpha$ for some $\pi \in \perms(\V)$ and some subtree $\l w.s$ of $u$.
    Then we have
    \[
        \abstr{w}[s]_\alpha
        = t([\l w.s]_\alpha)
        = t(\pi^{-1}\cdot x)
        = \pi^{-1} \cdot t(x)
        = \pi^{-1} \cdot \abstr{v}y
        = \abstr{\pi^{-1}(v)}(\pi^{-1} \cdot y).
    \]
    By the definition of abstraction, we have some $z\in \V$ with
    \[
        (w\ z)\cdot [s]_\alpha = (\pi^{-1}(v)\ z)\cdot \pi^{-1} \cdot y.
    \]
    Hence, $y$ is in the orbit of $[s]_\alpha$ and therefore $t(x)$ is in $[\V]X$.

    \item For $t(x) = (\ell,r) \in \nu L_\alpha × \nu L_\alpha$, let
    $x\in X$ be $\pi\cdot [(s_\ell, s_r)]_\alpha$ for some $\pi \in \perms(\V)$ and subtrees $s_\ell, s_r$ of $u$.
    Analogously to the previous case, we have
    \[
        (\ell,r)
        = t(x)
        = t(\pi\cdot [(s_\ell , s_r)]_\alpha)
        = \pi\cdot ([s_\ell]_\alpha, [s_r]_\alpha)
        = (\pi\cdot [s_\ell]_\alpha,\pi\cdot  [s_r]_\alpha)
        \in X×X.
    \]
    \end{enumerate}
    By construction, $u_\alpha \in X$ and $a^\dagger(x) = x$ holds for all $x\in X$. 
    By the finiteness of $C$, $X$ is orbit-finite and thus $a^\dagger[X]
    \subseteq \varrho L_\alpha$, and in particular $u_\alpha \in \varrho L_\alpha$.
\end{proof}

The following example shows that the bound in \autoref{prop:bound} on the number of elements of the $L$-coalgebra $C$ from the proof of \autoref{thm:onlyrational} can essentially not be improved even if we omit the usage of \l-abstraction. 
\begin{example}
    \label{bigexample}
Let $\ell \geq 1$ be a fixed natural number and let $m = 2^{\ell -  1}$. Further let $V = \{v_1, \ldots, v_m \}$ be a set of $m$ variables and consider the rational \l-trees parametrized by permutations $\sigma \in \perms(\V)$ shown in Figure~\ref{fig:bigex}.
\begin{figure}
  \begin{tikzpicture}[ lambdatree,
                        inode/.style={
                            draw=black,
                            fill=black,
                            shape=circle,
                            inner sep=0.5mm,
                        },
                        every edge/.style={
                             draw,
                        },
                      ]
   \node (root) {@$\mathrlap{\ = r_\sigma}$};
   \node (nextroot) [below left of=root,
                     xshift=-5cm,
                     yshift=0cm,
                     anchor=center
                    ] {$\mathllap{h_\sigma =\ }@$};
   \begin{scope}[every node/.style={
                    subtree,
                    minimum width=23mm,
                    minimum height=20mm,
                    inner sep=0cm,
                    yshift=0mm,
                    },
                    node distance=17mm,
                ]
   \node (root12) [below left of=nextroot,anchor=north]
                            {$r_{\sigma\,(1\ 2)}$};
   \node (root1n) [below right of=nextroot,anchor=north]
                    {$r_{\sigma\,(1\ \cdots\ m)}$};
   \end{scope}
   \node (l11) [below right of=root] {@};
   \node (l21) [below left  of=l11,xshift=-5mm] {@};
   \node (l22) [below right of=l11,xshift=5mm] {@};
   \node (l31) [below left  of=l21] {@};
   \node (l32) [below right of=l21] {@};
   \node (l33) [below left  of=l22] {@};
   \node (l34) [below right of=l22] {@};
   \coordinate (rm) at ($ (l32) !.5! (l33) $);
   \coordinate (r1) at ($ (l31) - (5mm,1cm) $);
   \coordinate (r2) at ($ (l34) - (-5mm,1cm) $);
   \coordinate (r1p) at ($ (l31) - (-0mm,1cm) $);
   \coordinate (r2p) at ($ (l34) - (0mm,1cm) $);
   \node (v1) [below left of=r1,anchor=west,yshift=-3mm,xshift=-1mm] {$\sigma v_{1}$};
   \node (v2) [below right of=r1,anchor=east,yshift=-3mm,xshift=1mm] {$\sigma v_{2}$};
   \node (vm) [below right of=r2,anchor=east,yshift=-3mm,xshift=1mm] {$\sigma v_{m}$};
   \node (vdots) at ($ (v1) !.5! (vm) $) {$\cdots$};

   \path[draw,
        ]
        (root) edge (l11)
        (l11) edge (l21)
        (l11) edge (l22)
        (l21) edge (l31)
        (l21) edge (l32)
        (l22) edge (l33)
        (l22) edge (l34)
        (root) edge (nextroot)
        (nextroot) edge (root12.north)
        (nextroot) edge (root1n.north)
        \foreach \x in {l31,l32,l33,l34} {
            (\x) edge ($ (\x) + (-4mm,-1cm) $)
            (\x) edge ($ (\x) +  (4mm,-1cm) $)
        }
        ;
    \path[draw, shorten >=2mm]
        (v1) edge (r1)
        (v2) edge (r1)
        (vm) edge (r2)
    ;

    \node[draw=none,inner sep=0pt,outer sep=0pt,fit=(l11) (v1) (vm)] (boundary) {};
    \draw [decorate,decoration={brace,amplitude=10pt,mirror,raise=4pt},yshift=0pt]
    (boundary.south east) -- (boundary.north east)
        node [black,midway,anchor=west,xshift=17pt,anchor=west,
              text width=2.6cm,align=left] {\footnotesize $\ell$ levels};

    
    \begin{scope}[every node/.style={
                    draw=black,
                    dotted,
                    inner sep=1mm,
                    outer sep=1mm,
                    rounded corners=1mm,
                  },
                  intree/.style={
                    minimum width=1cm,
                  },
                 ]

        \begin{pgfonlayer}{background}    
            \node[fit=(l11),intree] (layer1) {};
            \node[fit=(l21) (l22),intree] (layer2) {};
            \node[fit=(l31) (l34),intree] (layer3) {};
            \node[fit=(v1) (vm),intree] (layerl) {};
        \end{pgfonlayer}
    \end{scope}
    \node [cloud, draw,
            fill=white,
            cloud puffs=29.7,
            cloud puff arc=120,
            cloud ignores aspect,
            minimum height=3em,
            fit=(r1p)
                (r2p),
            ] {};
   \end{tikzpicture}
\caption{ }  \label{fig:bigex}
 \end{figure}
 With $r_\sigma$ and $h_\sigma$ we denote the corresponding subtrees rooted at the indicated nodes. Note that the $\alpha$-equivalence classes in $\varrho L_\alpha$ of each $r_\sigma$ and of any of its subtrees are singletons since $r_\sigma$ does not contain any \l-abstraction. For this reason we shall henceforth abuse notation and denote those equivalence classes by their representatives. Observe further that the group action on $\varrho L_\alpha$ satisfies $\tau \cdot r_\sigma = r_{\tau\sigma}$ for any $\tau, \sigma \in \perms(\V)$. This implies that all $r_\sigma$ and $h_\sigma$, respectively, lie in the same orbit. Similarly, one can see that all nodes on the same level in the right-hand maximal subtrees of every $r_\sigma$ (indicated by the dotted rectangles) lie in the same orbit. 

Now consider $r_\id$ and the corresponding orbit-finite subcoalgebra $X$ of $\varrho L_\alpha$ from the proof of Theorem~\ref{thm:rat}. The elements of $X$ are all subtrees of $r_\id$ with the group action inherited from $\varrho L_\alpha$. By the above reasoning we see that $X$ has precisely $\ell + 2$ orbits. Hence, the number of orbits of $X$ is logarithmic in $m$. But the number of subtrees of $r_\id$ grows faster than $m!$. To see this, notice first that the permutations $(1\ 2)$ and $(1\ 2\ \cdots\ m)$ generate the group of all permutations of $m$ elements. Thus, we see that $r_\id$ has all $r_\sigma$ as subtrees where $\sigma$ is any permutation that fixes $\V \setminus V$. For different $\sigma$ and $\tau$ we have that $r_\sigma$, $h_\sigma$, $r_\tau$ and $h_\tau$ are pairwise non-isomorphic. Thus, we see that $r_\id$ has at least $2\cdot m!$ subtrees. In addition, consider the $i$-th level (from 1 at the bottom to $\ell$ at the top) on the right in Figure~\ref{fig:bigex}. Each node on that level covers $k = 2^{i-1}$ variables. Then for each permutation of those $k$ variables there exists a subtree of the right-hand successor of some subtree $r_\sigma$ of $r_\id$ which is a complete binary tree of height $i$ with the front
given by the permutation. Thus, the total number of subtrees of $r_\id$ is precisely
\[
2\cdot m! + \sum_{i = 1}^{\ell} \left( {m \choose 2^{i-1}}\cdot (2^{i-1})!\right)
=
2\cdot m! + \sum_{i=1}^\ell \frac{m!}{(m-2^{i-1})!}.
\]
%
\end{example}

\section{Application: Corecursive Definitions on Rational $\lambda$-trees}
\label{sec:app}

Our result in Theorem~\ref{thm:rat} that rational \l-trees modulo
$\alpha$-equivalence form the final locally orbit-finite
$L_\alpha$-coalgebra yields a corecursion principle. In this section
we shall demonstrate this principle by considering two easy
applications. First we show that substitution as defined corecursively
for all \l-trees in~\cite{nomcoalgdata} restricts to rational
\l-trees. Secondly, we discuss the corecursive definition of the
computation of the Böhm tree of a given rational \l-tree.

\subsection{Substitution on Rational $\lambda$-trees}\label{sec:sub}
When performing operations known from (infinitary) \l-calculus on rational
\l-trees, it is not clear whether the resulting \l-tree still is rational in
general. One such operation is the substitution function
$\subs: \nu L_\alpha × \V ×\nu L_\alpha \longrightarrow \nu L_\alpha$,
which for a given triple $(t,v,s)$ replaces each occurence of the
variable $v$ in $t$ by $s$. Kurz et al.~\cite{nomcoalgdata} show how
to define \subs by coinduction: to do this they define an $L_\alpha$-coalgebra whoose unique
homomorphism into the final coalgebra yields \subs. It is possible to adapt this to
the orbit-finite case as follows.

For arbitrary coalgebras $A\xrightarrow{a}L_\alpha A$ and
$B\xrightarrow{b}L_\alpha B$ with orbit-finite carriers, one defines
$\subs_{A,B}: A×\V×B \to \varrho L_\alpha$. This map $\subs_{A,B}$ describes
the substitution of a variable within an element of the coalgebra
$A$ by some element of the coalgebra $B$. In the following
\[
B \xrightarrow{\mathsf{inl}} B + A \times \V \times B
\xleftarrow{\mathsf{inr}} A \times \V \times B
\qquad\text{and}\qquad
\V \xrightarrow{\mathsf{in}_1} L_\alpha X \xleftarrow{\mathsf{in}_2}
[\V] X
\]
denote coproduct injections. Now define
\(
    B+ A×\V×B \xrightarrow{[g,h]} L_\alpha(B+ A×\V×B)
\)
with $g= L_\alpha (\mathsf{inl})\circ b$ and
\(
    h = [h_\text{Var}, h_\text{Abs}, h_\text{App}] \circ (a×\id×\id)
\) using distributivity and
\begin{itemize}
\item \(
    h_\text{Var}: \V × \V×B \to L_\alpha(B+A×\V×B)
\),
\(
    h_\text{Var}(v,w,x) =
    \begin{cases}
        \mathsf{in}_1 v & \text{if }v\neq w \\
        L_\alpha(\mathsf{inl})(\underbrace{b(x)}_{\mathclap{\in L_\alpha(B)}}) &
            \text{if }v=w
    \end{cases}
\)

\item $h_\text{Abs}: ([\V]A)×\V×B \to L_\alpha (B+A×\V×B)$,
     $h_\text{Abs} := \mathsf{in}_2 \circ \tau_{A,\V×B}$, using the strength
     $\tau$ of the functor $[\V]$ with $\tau_{A,\V×B}: ([\V]A)×\V×B \to
     [\V](A×\V×B)$.

\item $h_\text{App}: A×A×\V×B \to L_\alpha (B+A×\V×B)$ is the composition of the
obvious steps
\(
    A×A×\V×B \to (A×\V×B)×(A×\V×B) \to L_\alpha (B+ A×\V×B).
\)

\end{itemize}
$B + A× \V × B$ is orbit-finite, so we get a unique $L_\alpha$-coalgebra
homomorphism $[g',h']$ into $\varrho L_\alpha$, making the following diagram commute:
\[
    \begin{tikzcd}[compactcd]
        B + A× \V × B
            \arrow{d}[left]{[g',h']}
            \arrow{r}{[g,h]}
            & 
        L_\alpha(B + A× \V × B)
            \arrow{d}[right]{L_\alpha([g',h'])}
            \\
        \varrho L_\alpha
            \arrow{r}{f_\alpha}
            &
            L_\alpha(\varrho L_\alpha)
    \end{tikzcd}
\]
Now we define $\subs_{A,B} := h'$. It remains to extend this to
the desired domain $\varrho L_\alpha × \V × \varrho L_\alpha$. Let $I$
be the set of all orbit-finite subcoalgebras $(X,a)$
of $(\nu L_\alpha, t_\alpha)$. Then we know
from (the discussion following) Proposition~\ref{prop:image} that
\[
    \varrho L_\alpha = \bigcup\limits_{(X,a) \text{ in  } I} X.
\]
Hence, we can define substitution $\subs_\text{rat}: \varrho L_\alpha × \V × \varrho L_\alpha \to \varrho L_\alpha$ on rational \l-trees by
\[
    \subs_\text{rat} (x,v,y) = \subs_{A,B}(x,v,y), \text{for some $A,
      B \in I$ with $x\in A$ and $y\in B$}.
\]
It remains to prove that the result does not depend on the choice of $A$ and
$B$. But if we have any other $(A', a')$ in $I$ with $x \in A'$ then
since both $A$ and $A'$ are subcoalgebras we have $a(x) = t_\alpha(x)
= a'(x)$. Similarly for $B$ and $y \in B$. 

Thus, since the function $h$ is defined by pattern matching, i.e.~on the alternatives indicated by $a$, it
behaves independently from the choice of $A$ and $B$ as desired. 

To summarize, we can say that one can define operations on $\varrho L_\alpha$ if these $n$-ary operations can be defined restricted to $n$ orbit-finite subcoalgebras of $\nu L_\alpha$ as we have just seen for the operation of substitution.

\subsection{Normalization of Rational $\lambda$-trees}

In the $\l$-calculus different kinds of normal forms play an important role. One of them is the \emph{head normal form} (\emph{hnf}, for short). A $\l$-term is in hnf if it is of the form
$\l x_1\ldots\l x_n.yN_1\ldots N_m$, where $y$ is a variable and the $N_i$ are arbitrary terms. If one recursively requires the $N_i$ to be in hnf as well, one gets the definition of \emph{Böhm trees}. Adding an additional constant symbol $\bot$ to the syntax of the $\l$-calculus allows the following corecursive definition of the Böhm tree $\op{BT}(M)$ of a \l-term $M$ (see~\cite{nomcoalgdata}): 
\begin{align}
    \op{BT}(M) = \begin{cases}
        \l x_1\ldots\l x_n.y \op{BT}(N_1)\ldots \op{BT}(N_m)
        &
        \text{if }
        M\twoheadrightarrow_\beta
        \l x_1\ldots\l x_n.y N_1\ldots N_m
        \\
        \bot&\text{otherwise},
    \end{cases}
    \label{bt}
\end{align}
where $\epito_\beta$ denoted the reflexive, transitive closure of $\beta$-reduction $\to_\beta$. 
So a Böhm tree of a term $M$ is the normal form of $M$ in the infinitary $\l$-calculus, or $\bot$ if there is no normal form \cite{bar84,KKSdV97}.

Kurz et al.~\cite{nomcoalgdata} obtained the operation $\op{BT}$ by using the final coalgebra  $\Lambda_\alpha^\infty$ of the following endofunctor $\Lb$ on $\Nom$ expressing the syntax of the $\l$-calculus extended by $\bot$:
\[
    \Lb X = \V + \{\bot\} + [\V]X + X×X.
\]
The nominal set $\Lambda_\alpha^\infty$ consists of all $\l$-trees over $\bot$ modulo $\alpha$-equivalence, i.e.~\l-trees where some leaves are labelled by $\bot$ in lieu of a variable from $\V$. So $\op{BT}: \Lambda_\alpha^\infty \to \Lambda_\alpha^\infty$ is defined as the unique coalgebra homomorphism from a coalgebra $b: \Lambda_\alpha^\infty \to \Lb(\Lambda^\infty_\alpha)$, where $b$ is defined by 
\[
b(M) = \begin{cases}
  t(N) & \text{if $M \epito_\beta N$ and $N$ is in hnf}\\
  \bot & \text{else},
\end{cases}
\]
into the final coalgebra $\nu \Lb$. 

The rational fixpoint $\varrho \Lb$ consists of the rational \l-trees over $\bot$ modulo $\alpha$-equivalence; in fact, it is easy to extend to the proof of Theorem~\ref{thm:rat} to the functor $\Lb$.

However, $\op{BT}$ does not restrict to $\varrho \Lb$. Consider the $\l$-term
\begin{align}
    u := Y_{(\l g.\l x.x\,(g(x\,y)))},
    \text{ with } Y_f := (\l z.f (z\ z)) (\l z.f (z\ z)).
    \label{def:tY}
\end{align}
which is finite and therefore rational. In other words, $u$ represents an element of $\varrho \Lb$. Let us look at its Böhm tree, by considering the $\beta$-reduction sequence of $u$ using $Y_f \rightarrow_\beta f(Y_f f)$:
\begin{align*}
  u \,x
    &= Y_{(\l g.\l x.x\,(g(x\,y)))}\,x
    \rightarrow_\beta (\l g.\l x.x\, (g(x\,y)))\,
                      \overbrace{Y_{(\l g.\l x.x\,(g(x\,y)))}}^{\smash{u}}\,x
    \twoheadrightarrow_\beta x\,(u\,(x\,y))
    \label{trecursion}
\end{align*}
Applying $u x\twoheadrightarrow_\beta x\,(u\,(x\,y))$ multiple times yields the following sequence:
\[
\begin{tikzpicture}[lambdatree,mathnodes]
    \node (z) {@}
    child { node[subtree] {u} }
    child { node {x} }
    ;
\end{tikzpicture}
\ \twoheadrightarrow_\beta\ %
\begin{tikzpicture}[lambdatree,mathnodes]
    \node (z) {@}
    child { node {x} }
    child {
        node {@}
            child { node[subtree] {u} }
            child { node {@}
                child { node {x} }
                child { node {y} }
            }
        }
    ;
\end{tikzpicture}
\twoheadrightarrow_\beta%
\begin{tikzpicture}[lambdatree,mathnodes]
    \node (z) {@}
    child { node {x} }
    child { node {@}
            child { node {@}
                child { node {x} }
                child { node {y} }
            }
            child {
                node {@}
                    child { node[subtree] {u} }
                    child { node {@}
                        child { node {@}
                            child { node {x} }
                            child { node {y} }
                        }
                        child { node {y} }
                    }
                }
        }
    ;
\end{tikzpicture}
\twoheadrightarrow_\beta
\begin{tikzpicture}[lambdatree,mathnodes]
    \node (z) {@}
    child { node {x} }
    child { node {@}
            child { node {@}
                child { node {x} }
                child { node {y} }
            }
            child { node {@}
                child { node {@}
                    child { node {@}
                        child { node {x} }
                        child { node {y} }
                    }
                    child { node {y} }
                }
                child[noedge] { node[anchor=west,yshift=2mm] {\ddots} }
            }
        }
    ;
\end{tikzpicture}
\twoheadrightarrow_\beta\cdots
\]
The resulting infinite \l-tree is clearly not rational; in fact, consider the subtrees defined by the left-hand cildren of every node on the right-most path. Then the subtree rooted at the left-hand sucessor of the $n$-th node on that path has the list $xy^n$ as its front of leaf labels. And since this tree does not contain any \l-operators its $\alpha$-equivalence class is a singleton, whence $\op{BT}(u) \not \in \varrho \Lb$. 

\takeout{ 
This infinite tree, let us call it $N$, is not rational, and thus not in $\varrho \Lb$ but in $\nu \Lb$. As $N$ has no abstractions at all, it is a Böhm tree, and we can conclude:
\[
    \varrho \Lb \ni u\ x \overset{BT}{\longmapsto}
    N
    = x\, \big((x\,y)\ \big((x\,y\,y) \big(\cdots \Big)
    \not\in \varrho \Lb.
\]
So $\op{BT}$ does not restrict to $\varrho \Lb$, or in other words the corresponding coalgebra  is not lfp.
} 
Of course, there are also \l-terms, whose Böhm tree is infinitely large but stays rational, for example:
\[
    s := Y_{(\l g.\l x.\l y.x\,g\,y)))},
    \text{ with }
    s
    \rightarrow_\beta
    (\l g.\l x. \l y.x\,g\,y)))\,s
    \rightarrow_\beta
    \l x.\l y.x\,s\,y.
    \twoheadrightarrow_\beta\cdots \twoheadrightarrow_\beta
    \hspace{-8mm}
\begin{tikzpicture}[lambdatree,mathnodes]
    \node (z) {\l x}
    child { node[yshift=-1mm] {\l y}
        child { node[yshift=-1mm]  {@}
            child { node (chld) {@}
                child { node {x} }
                child[noedge] { node { } }
            }
            child { node {y} }
        }
    };
    \begin{scope}
    \path[use as bounding box]
        ($ (z) + (-1cm,5mm) $) rectangle (z);
    \path[draw] (chld) .. controls ($ (chld) - (-2cm,2cm) $)
                             and ($ (z) + (1cm,2cm) $) .. (z);
    \end{scope}
\end{tikzpicture}
\]
The rational \l-tree on the right above, call it $r$, is the Böhm tree for $s$, i.e. $\op{BT}(s)=r\in \varrho \Lb$. In other words the subcoalgebra $S$ of $(\Lambda^\infty_\alpha, b)$ above generated by $[s]_\alpha$ is orbit-finite, hence the restriction of $\op{BT}$ to $S$ factorizes through $\varrho \Lb$. 

Can one characterize the largest subcoalgebra of $(\Lambda^\infty_\alpha, b)$ whose image under $\op{BT}$ lies in $\varrho \Lb$? We leave this question for further work.

\section{Conclusion and Future Work}
\label{sec:con}

We have contributed to the abstract algebraic study of
variable binding using nominal sets. In particular, we have extended a
recent coalgebraic approach to infinitary \l-calculus due to Kurz et
al. Whereas they proved in~\cite{nomcoalgdata} that \l-trees with
finitely many variables modulo $\alpha$-equivalence form the final
coalgebra for the functor $L_\alpha$ on $\Nom$ we have given a
characterization of the rational fixpoint of that functor. It contains
precisely the rational \l-trees modulo $\alpha$-equivalence. 

This characterization entails a corecursion principle for rational
\l-trees because the rational fixpoint is the final locally orbit-finite
coalgebra for $L_\alpha$. In this sense we have achieved finitary
corecursion for the infinitary \l-calculus. We have demonstrated the new principle and its
limitations with two applications: a corecursive definition of substitution and of a normalform computation. 

Our work is only a first step in the study of the coalgebraic approach
to finitary coinduction for infinitary terms with variable binding
operators. First, it should be clear that our results generalize from
\l-terms to the rational fixpoint for endofunctors on $\Nom$
associated to a binding signature. Other points for future work are:
(1)~the extension of the coalgebraic approach to rational and infinitary
\l-terms using nominal sets to treat the solutions of higher-order
recursion schemes as was done in the setting of presheaves on finite
sets in~\cite{highrecursion}, and (2)~the study of specification formats
that extend our simple corecursion principle that follows from
finality; more precisely, Bonsague et al.~\cite{bmr12,mbmr13} have proposed
bipointed specifications as an abstract format (by restricting Turi's and
Plotkin's abstract GSOS rules~\cite{tp97}) to specify algebraic
operations on the rational fixpoint of an endofunctor. It should be
interesting to work out a concrete rule format corresponding to bipointed
specifications for rational \l-terms and rational terms for arbitrary
binding signatures. Last, but not least, the similarity of the results
in~\cite{highrecursion} on the one hand and those
in~\cite{nomcoalgdata} and here on the other hand is so striking that there should be a
formal connection; however, to our knowledge this has not been worked out in the
literature yet.

\takeout{
Some points for future work:
\begin{itemize}
\item Generalize to arbitrary bindung signatures
\item Work on solutions of recursion schemes in nominal sets
\item Work out GSOS formats (in particular, bipointed specifications) for rational lambda-trees
\item Formal connection to recursion scheme paper by Ad\'amek et al.
\end{itemize}} 

%
%
\bibliographystyle{plain}
\bibliography{refs}

%
%

\iffull
\clearpage
\appendix

\section{Proof of Lemma~\ref{orbitsamesupp}}
\begin{proof}
    Assuming $\pi \cdot x_1 = x_2$, the equivariance of \supp provides the equation
    \[
        \supp(x_1) = \supp(\pi\cdot x_2) = \pi\cdot \supp(x_2)
        = \{ \pi (v) \mid v \in \supp(x_2) \}.
    \]
    Since $\pi$ is a bijective map this establishes the desired bijection between $\supp(x_1)$ and $\supp(x_2)$.
\end{proof}

\section{Proof of Lemma~\ref{sameorbitfaculty}}

\begin{proof}
    Consider the set
    \[
        N = \{ \pi \cdot x \mid \pi \in \perms(\V)\text{ and } \supp(\pi\cdot x) = \supp(x)   \}
    \] 
    All $\pi$ with $\pi\cdot x \in N$ have to fulfill the property $\supp(x) = \supp(\pi \cdot x) = \pi \cdot \supp(x)$. This means, $\pi$ restricts to $\supp(x)$, i.e.~$\pi(v) \in \supp(x)$ iff $v \in \supp(x)$. Hence, each such $\pi$ can be factorized into two permutations $f_\pi,g_\pi\in \perms(\V)$ where
    \[
        f_\pi(v) = \begin{cases}
            \pi(v) &\text{if }v\in \supp(x) \\
            v      &\text{if }v\not\in \supp(x) \\
        \end{cases}
        ,\quad
        g_\pi(v) = \begin{cases}
            v &\text{if }v\in \supp(x) \\
            \pi(v)      &\text{if }v\not\in \supp(x) \\
        \end{cases}
    \]
    and with $f_\pi\circ g_\pi = g_\pi \circ f_\pi = \pi$. Since $\supp(x)$ supports $x$ we have $g_\pi \cdot x = x$, and this implies
    \begin{align*}
        N &= \{ f_\pi\cdot g_\pi \cdot x
                \mid \pi \in \perms(\V) \text{ and } \pi\cdot \supp(x) = \supp(x)
            \}
        \\
          &= \{ \mathrlap{f_\pi \cdot x}\phantom{f_\pi\cdot g_\pi \cdot x }
                \mid \pi \in \perms(\V) \text{ and } \pi\cdot \supp(x) = \supp(x)
            \}.
    \end{align*}
    The set of permutations $\{f_\pi \mid \pi\cdot x \in N\}$ has cardinality $|\supp(x)|!$, hence so is $N$.
\end{proof}

\section{Proof of Lemma~\ref{contained}}
\begin{proof}
  First there are only finitely many elements whose support is exactly $W$. If there is no element in $\mathcal{O}$ with support $W$, then we are done. And if $x$ in $\mathcal{O}$ satifies $\supp(x) = W$, there are only finitely many elements in $\mathcal{O}$ with the same support by \autoref{sameorbitfaculty}.
  
  The statement of the lemma now follows since $W$ has only finitely many subsets and for each $W' \subseteq W$ there are only finitely many elements with support $W'$.
\end{proof}

\takeout{ 
\section{Proof of Proposition~\ref{xfxsamesupport}}

\begin{proof}
    Combine \autoref{orbitsamesupp} with the inclusion $\supp(f(x)) \subseteq \supp(x)$ and use that the
    support is finite:
    \[
        \left.
        \begin{aligned}
        |\supp(f(x))| &= |\supp(x)| \\
        \supp(f(x)) \phantom{|}&\subseteq \phantom{|}\supp(x)
        \end{aligned}\right\}
        \Longrightarrow
        \supp(x) = \supp(f(x))
        \qedhere
    \]
\end{proof}
} 

\section{Proof of Proposition~\ref{prop:ratL}}

\begin{proof}
Take a tree $t \in \varrho L$ and an orbit-finite subcoalgebra $(S,s)$ of $\varrho L$ with $t \in S$. Every subtree $t'$ of $t$ has support 
\[
    \supp(t') \subseteq \supp(t).
\]
But there are only finitely many $t' \in S$ with that property; this follows from \autoref{contained} using $W = \supp(t)$ and that $S$ is orbit-finite. Hence, $t$ has only finitely many subtrees, i.e.~$t$ is rational. So $\varrho L$ contains only rational \l-trees. Conversely, for each rational \l-tree $t$ we can construct an orbit-finite $L$-coalgebra by taking the subtrees of $t$ as the carrier set of that coalgebra and with the coalgebra structure given by decomposing trees at the root.
\end{proof}

\section{Proof of Theorem~\ref{thm:onlyrational}}

    Let $m$ be the maximal number of free variables in any element of $X$, i.e.
    \[
        m := \max_{x\in X}\big|\supp(x)\big|.
    \]
    The maximum exists by \autoref{orbitsamesupp} since $X$ is orbit-finite.

    Let $W \subseteq \V$ be some set of $m+1$ variables containing $\supp(\op{root})$. Hence 
    \begin{equation}
      \text{for all $x\in X$ there exists a $w\in W$ with $w \fr  x$.}
      \label{eq:alwayssomefresh}
    \end{equation}
      
    In the following, we will construct a rational $\l$-tree in the $\alpha$-equivalence class of $a^\dagger(\op{root})$.
    First, define an $L$-coalgebra $C \xrightarrow{c} LC = \V + \V\times C + C\times C$ in \Set:
    \[
        C := \{x \in X\mid \supp(x) \subseteq W \},
        \quad
        c(x) = \begin{cases}
            w & \text{if }a(x) = w \in W \subseteq \V \\
            (\ell, r) &\text{if }a(x) = (\ell,r)\in X\times X\\
            (w,y) &\text{if }a(x) = \abstr{v} y'\text{ and }y = (v\ w)y'\\
            &\text{for some }w\in W\setminus\supp(x)
        \end{cases}
    \]
    We verify that $c$ is well-defined, i.e., its image lies in $LC$:
    \begin{itemize}
    \item For the case $a(x)$ in $\V$, $a(x)$ is also in $W$ because $x \in C$
          and thus
          \[
              \supp(a(x)) \subseteq \supp(x) \subseteq W.
          \]
    \item For the case $a(x) = (\ell,r)$ in $X\times X$,
    \[
        \supp(\ell)\cup\supp(r) \subseteq \supp(x) \subseteq W.
    \]
    So $\ell, r \in C$.

    \item For $a(x) = \abstr{v}y' \in [\V]X$, we know by
    \eqref{eq:alwayssomefresh}, that there is such a fresh $w \in W$. So
    $\abstr{w}y = \abstr{v}y'$. In particular, $y\in C$, because
    \[
        \supp(y) \subseteq \{w\} \cup \supp(a(x))
                 \subseteq \{w\} \cup \supp(x)
                 \subseteq \{w\} \cup W \subseteq W.
    \]
    \end{itemize}
    The last item gives that $\abstr{w}y = \abstr{v}y'$, hence the following diagram commutes in \Set:
    \begin{equation}
      \begin{tikzcd}[compactcd]
        C \rar{c} \dar[hook][swap]{i}
        &
        LC 
        \rar[hook]{Li}
        &
        LX
        \dar{q_X}
        \\
        X
        \ar{rr}[swap]{a}
        &&
        L_\alpha X
      \end{tikzcd}
      \takeout{ 
      \begin{tikzcd}[compactcd]
        C \arrow[hook]{r}{} \arrow{d}[left]{c}& X \arrow{r}{a} & L_\alpha X
        \mathrlap{\ =\V + [\V]X + X×X}
        \\
        LC
        \arrow[hook]{rr}{}
        &&
        LX
        \mathrlap{\ = \V +\V×X + X×X}
        \arrow{u}[right]{\id_\V + \operatorname{bind} + (\id_X×\id_X)}
    \end{tikzcd}}
        \label{eq:coalgplusbind} 
    \end{equation}
    \takeout{Here, bind denotes the equivariant $\V×X \to [\V]X$, $(v,x)\mapsto
    \abstr{v}x$. (Note also that we use that coproducts and finite products
    in \Nom are build the same way as in \Set.) The commutativity of the diagram
    tells us that the $L$-coalgebra $(C,c)$ in \Set plus binding behaves the
    same as the $L_\alpha$-coalgebra $(X,a)$ in \Nom.}

    Observe that $C$ is finite, because $X$ is orbit-finite and within
    an orbit there are only finitely many elements with a support
    contained in $W$ by \autoref{contained}.  Let $c^\dagger$
    denote the unique $L$-coalgebra morphism into the final
    $L$-coalgebra in \Set. Since $C$ is finite, we know that $c^\dagger:
    C \to \nu L$ factors through the rational fixpoint, i.e.~for every
    $x \in C$, $c^\dagger(x)$ is a rational $\lambda$-tree. In particular, $c^\dagger(\op{root})$ is a rational $\lambda$-tree. Using~\eqref{eq:coalgplusbind} and the \emph{final chains} of the set functor $L$ and the functor $L_\alpha$ on $\Nom$ we shall prove below that 
    \begin{equation}\label{eq:ca}
      \eqa{c^\dagger(x)} = a^\dagger(x)\qquad\text{for all $x\in C\subseteq X$},
    \end{equation}
    where $\eqa{-}$ denotes $\alpha$-equivalence classes.
    For $x = \op{root}$ this gives that $c^\dagger(\op{root})$ is the desired
    rational \l-tree in $a^\dagger(\op{root})$ and thus the $\alpha$-equivalence
    class $a^\dagger(\op{root})$ is rational.

    We will now prove Equation~\eqref{eq:ca}. Before we can proceed to the proof we need to recall some technical background (cf.~\cite{nomcoalgdata}). First recall that every endofunctor $F$ on a category $\C$ with a final object $1$ induces its final chain
\[
1 \leftarrow F1 \leftarrow FF1 \leftarrow \cdots 
\]
Whenever the limit of this chain exists in $\C$ and $F$ preserves this limit, then it carries the final $F$-coalgebra $\nu F$. Moreover, every $F$-coalgebra $c: C \to FC$ induces its \emph{canonical cone} $c^n: C \to F^n 1$, $n < \omega$, on the final chain defined inductively as: $c^0: C \to 1$ is the unique map, and given $c^n$ we define 
\[
c^{n+1} = C \xrightarrow{c} FC \xrightarrow{Fc^n} FF^n 1 = F^{n+1} 1
\qquad
\text{for all $n > 0$}. 
\]
The unique $F$-coalgebra homomorphism $c^\dagger: C \to \nu F$ is then induced by the universal property of the limit -- $c^\dagger$ is the unique morphism such that 
\begin{equation}
  \label{eq:cn}
  c^n = (C \xrightarrow{c^\dagger} \nu F \xrightarrow{p_n} F^n 1) \qquad \text{for every $n <\omega$,}
\end{equation}
where $p_n: \nu F \to F^n 1$ denote the limit projections. 
\takeout{ 
Note also that coalgebra homomorphisms ``preserve'' canonical cocones in the following sense:
\begin{lemma}
  Let $h: (A,a) \to (B,b)$ be an $F$-coalgebra homomorphism. Then we have
  \[
  a^n = (A \xrightarrow{h} B \xrightarrow{b^n} F^n 1)
  \qquad
  \text{for all $n < \omega$}. 
  \]
\end{lemma}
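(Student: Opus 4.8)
The plan is to prove the identity $a^n = b^n \circ h$ by induction on $n$, using nothing more than the inductive definition of the canonical cones and the defining equation $Fh \circ a = b \circ h$ of a coalgebra homomorphism.

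For the base case $n = 0$, both $a^0$ and $b^0 \circ h$ are morphisms from $A$ into the final object $1$. Since there is a unique such morphism, they coincide by finality.

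For the inductive step I would assume $a^n = b^n \circ h$ and compute $a^{n+1}$ from its definition $a^{n+1} = Fa^n \circ a$. Substituting the induction hypothesis and using functoriality of $F$ to split $F(b^n \circ h)$ into $Fb^n \circ Fh$ yields
\[
a^{n+1} = Fb^n \circ Fh \circ a.
\]
At this point I would apply the homomorphism square $Fh \circ a = b \circ h$ to replace the tail $Fh \circ a$ by $b \circ h$; the leading factor $Fb^n \circ b$ is precisely $b^{n+1}$ by definition, giving $a^{n+1} = b^{n+1} \circ h$, as required. This closes the induction.

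This argument is entirely routine, so there is no genuine obstacle to overcome. The only points that demand care are bookkeeping ones: keeping the orientation of the homomorphism equation straight (it reads $Fh \circ a = b \circ h$, and it is the right-hand occurrence of $a$ that gets rewritten), and placing the factor $Fh$ in the correct position so that the functorial splitting of $F(b^n \circ h)$ lines up exactly with the homomorphism identity before it is invoked.
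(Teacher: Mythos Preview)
Your proof is correct and follows exactly the same approach as the paper: induction on $n$, with the base case by finality of $1$ and the inductive step via the chain $a^{n+1} = Fa^n \cdot a = Fb^n \cdot Fh \cdot a = Fb^n \cdot b \cdot h = b^{n+1} \cdot h$.
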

\begin{proof}
  By induction on $n$: the base case follows since $1$ is a final object, and for the induction step one computes:
\[
a^{n+1} = Fa^n \cdot a = Fb^n \cdot Fh \cdot a = Fb^n \cdot b \cdot h = b^{n+1} \cdot h. \qedhere
\]
\end{proof}}

Now let us turn to our concrete functors of interest. The terminal coalgebra for the functor $L: \Set \to \Set$ consists of all finite and infinite $\lambda$-trees. Since $L$ preserves all limits, $\nu L$ is the limit of its final chain $L^n 1$ and we denote the limit projections by 
\[
p_n: \nu L \to L^n 1
\] 
and the canonical cone induced by $c: C \to LC$ by 
\[
c^n: C \to L^n1.
\] 

We also consider the final chain of the functor $L_\alpha$ on $\Nom$, and we take its limit in $\lim L_\alpha^n 1$ in $\Set$ with the projections 
\[
p_{\alpha,n}: \lim L_\alpha^n 1 \to L_\alpha^n 1, 
\qquad
\text{for $n < \omega$}. 
\]
Notice that this limit does \emph{not} carry the final $L_\alpha$-coalgebra. But $\nu L_\alpha$ is a subset of the limit via the unique map $\iota_\alpha: \nu L_\alpha \to \lim L_\alpha^n 1$ induced by the canonical cone $(t_\alpha^n)_{n < \omega}$ induced by the structure $t_\alpha: \nu L_\alpha \to L_\alpha(\nu L_\alpha)$ of the final coalgebra, i.e., $\iota_\alpha$ is the unique map such that 
\begin{equation}
  \label{eq:iota}
  t_\alpha^n = (\nu L_\alpha \xrightarrow{\iota_\alpha} \lim L_\alpha^n 1 \xrightarrow{p_{\alpha,n}} L_\alpha^n 1)
  \qquad
  \text{for all $n < \omega$.}
\end{equation}
We will also use the canonical cone 
\[
a^n: X \to L_\alpha^n 1, \qquad n < \omega,
\] 
of our given $L_\alpha$-coalgebra $(X,a)$. 

As a final ingredient we need to relate the final chains of $L$ and $L_\alpha$. To this end observe that $L$ can be construed as an endofunctor on $\Nom$ (more precisely, $L$ lifts to $\Nom$), and the we have a quotient natural transformation $q: L \to L_\alpha$ with components
\[
q_X = (LX = \V + \V \times X + X \times X \xrightarrow{\id + \theta_X + \id \times \id} \V + [\V] X + X = L_\alpha X,
\]
where $\theta_X: \V \times X \to [\V] X$ is the natural transformation given by the canonical quotient maps (cf.~Definition~\ref{def:abstr}).
Using $q$ one can define a natural transformation $\eqa{-}^n: L^n 1 \to L_\alpha^n 1$, $n < \omega$, between the respective final chains by induction: $\eqa{-}^0$ is the identity on $1$ and 
\[
\eqa{-}^{n+1} =  L^{n+1} 1 = LL^n 1 
\xrightarrow{L\eqa{-}^n} 
LL_\alpha^n 1 
\xrightarrow{q_{L_\alpha^n 1}}
L_\alpha L_\alpha^n 1 = L_\alpha^{n+1} 1.
\]
Now we regard the two final chains and the natural transformation given by the $\eqa{-}^n$ in $\Set$ and use the universal property of the limit of the $L_\alpha^n 1$ to obtain a unique map $\eqa{-}: \nu L \to \lim L_\alpha^n 1$ such that the following squares commute for all $n < \omega$:
\begin{equation}\label{eq:eqa}
  \begin{tikzcd}[compactcd]
    L^n 1 \ar{d}[swap]{\eqa{-}^n} & 
    \nu L \ar{l}[swap]{p_n}  \ar{d}{\eqa{-}}
    \\
    L_\alpha^n 1 & 
    \lim L_\alpha^n 1
    \ar{l}{p_{\alpha,n}}
  \end{tikzcd}
\end{equation}
We denote by $i: C \hra X$ the inclusion map and consider the following diagram:
\begin{equation}
  \label{diag:zz}
\begin{tikzcd}[compactcd]
  \nu L
  \ar{d}[swap]{\eqa{-}}
  \ar[hookleftarrow]{r}
  &
  \Tffv
  \ar[->>]{d}
  &
  C 
  \ar[shiftarr={yshift=3.5ex}]{ll}[swap]{c^\dagger} 
  \ar[hook]{d}{i} 
  \ar[dashed]{l}
  \\
  \lim L_\alpha^n 1
  \ar[hookleftarrow]{r}[swap]{\iota_\alpha}
  &
  \nu L_\alpha
  &
  X
  \ar{l}{a^\dagger}
\end{tikzcd}
\end{equation}
The set $\Tffv$ consists of all \l-trees with finitely many free variables, and this makes the left-hand square above the pullback of $\eqa{-}$ along $\iota_\alpha$ as proved in~\cite[Proposition~5.33]{nomcoalgdata}. There it was also shown that the map opposite $\eqa{-}$ is surjective (see~\cite[Theorem~5.34]{nomcoalgdata}), and it follows that $\nu L_\alpha$ consists of all equivalence classes modulo $\alpha$-equivalence of \l-trees in $\Tffv$. 

Now note that once we prove that the outside of Diagram~\eqref{diag:zz} commutes we obtain the dashed map making the diagram commutative. This finishes our proof since the right-hand square of~\eqref{diag:zz} is precisely our desired equation~\eqref{eq:ca}. We now establish the commutativity of the outside of the diagram by proving inductively that 
\[
p_{\alpha,n} \o \iota_\alpha \o a^\dagger \o i = p_{\alpha,n} \o \eqa{-} \cdot c^\dagger,
\qquad
\text{for all limit projections $p_{\alpha, n}$, $n < \omega$}.
\]
The base case is clear and for the induction step we compute:
\enlargethispage{10pt}
\[
\begin{array}{rcl@{\qquad}p{8cm}}
  p_{\alpha, n+1} \o \iota_\alpha \o a^\dagger \o i 
  & = & t_\alpha^{n+1} \o a^\dagger \o i & by~\eqref{eq:iota} \\
  & = & L_\alpha t_\alpha^n \o t_\alpha \o a^\dagger \o i & definition of $t_\alpha^{n+1}$ \\
  & = & L_\alpha t_\alpha^n \o L_\alpha a^\dagger \o a \o i & $a^\dagger$ coalgebra homomorphism \\
  & = & L_\alpha t_\alpha^n \o L_\alpha a^\dagger \o q_X \o Li \o c & see~\eqref{eq:coalgplusbind} \\
  & = & q_{L_\alpha^n 1} \o L t_\alpha^n \o L a^\dagger \o Li \o c & $q$ natural and $t_\alpha^n$, $a^\dagger$ equivariant \\
  & = & q_{L_\alpha^n 1} \o L p_{\alpha,n} \o L \iota_\alpha \o L a^\dagger \o Li \o c & by~\eqref{eq:iota} \\
  & = & q_{L_\alpha^n 1} \o L(p_{\alpha,n} \o \iota_\alpha \o a^\dagger \o i) \o c & functoriality of $L$ \\
  & = & q_{L_\alpha^n 1} \o L(p_{\alpha,n} \o \eqa{-} \o c^\dagger) \o c & induction hypothesis \\
  & = & q_{L_\alpha^n 1} \o Lp_{\alpha,n} \o L\eqa{-} \o Lc^\dagger \o c & functoriality of $L$ \\
  & = & q_{L_\alpha^n 1} \o L\eqa{-}^n \o Lp_n \o Lc^\dagger \o c & by~\eqref{eq:eqa} \\
  & = & q_{L_\alpha^n 1} \o L\eqa{-}^n \o Lc^n \o c &  by~\eqref{eq:cn} \\
  & = & \eqa{-}^{n+1} \o Lc^n \o c & definition of $\eqa{-}^{n+1}$ \\
  & = & \eqa{-}^{n+1} \o c^{n+1} & definition of $c^{n+1}$ \\
  & = & \eqa{-}^{n+1} \o p_{n+1} \o c^\dagger & by~\eqref{eq:cn} \\
  & = & p_{\alpha,n+1} \o \eqa{-} \o c^\dagger & by~\eqref{eq:eqa}
\end{array}
\]
This completes the proof of the desired equation~\eqref{eq:ca}.\qed
\fi

\end{document}